\documentclass[11pt]{amsart}

\usepackage{tikz}

\usepackage{mathpazo}
\usepackage{amsmath,amssymb,amsthm}
\usepackage[latin1]{inputenc}
\usepackage{version,tabularx,multicol}
\usepackage{graphicx,float,psfrag}
\usepackage{stmaryrd}
 \usepackage{color}
\usepackage[pdftex]{hyperref}
\usepackage[numbers,sort&compress]{natbib}

 \setlength{\topmargin}{-1.5cm}
 \setlength{\oddsidemargin}{0pt}
 \setlength{\evensidemargin}{0pt}
 \setlength{\textwidth}{16.4cm}
 \setlength{\textheight}{24cm}
 \setlength{\parindent}{12pt}
 \setlength{\parskip}{4pt}

\newcommand{\R}{\mathbb{R}}

\newcommand{\E}{\mathbb{E}}

\def\build#1_#2^#3{\mathrel{ \mathop{\kern 0pt#1}\limits_{#2}^{#3}}}

\def\d{\mathrm{d}}

\def\cN{{\mathcal  N}}

\def\d{\mathrm{d}}

\renewcommand{\P}{\mathbb{P}}

\newcommand{\ind}{\mathbf{1}}

\theoremstyle{plain}
\newtheorem{thm}{Theorem}[section]
\newtheorem{lmm}[thm]{Lemma}

\newtheorem{crl}[thm]{Corollary}

\pagestyle{headings}

\setcounter{tocdepth}{2}

\title{Large deviations for the maximum of a reducible two-type branching Brownian motion}

\author{Hui He}
\address{Hui He,
Laboratory of Mathematics and Complex Systems, School of Mathematical Sciences, Beijing Normal University, P.R. China}
\email{hehui@bnu.edu.cn}

\date{\today}


\begin{document}



\begin{abstract}
We consider a two-type reducible branching Brownian motion, defined as a particle
system on the real line in which particles of two types move according to independent
Brownian motions and create offspring at a constant rate. Particles of type $1$ can give
birth to particles of types $1$ and $2$, but particles of type $2$ only give births to descendants
of type $2$. Under some specific conditions, Belloum and Mallein in \cite{BeMa21} showed that  the maximum position $M_t$ of all particles alive at time $t$, suitably centered   by a deterministic function $m_t$, converge weakly. In this work, we are interested in the decay rate of the following upper large deviation probability, as $t\rightarrow\infty$,
\[
\P(M_t\geq \theta m_t),\quad \theta>1.
\]
We shall show that the decay rate function exhibits phase transitions depending on  certain relations between $\theta$, the variance of the underlying Brownian motion and the branching rate. 
\end{abstract}

\subjclass[2010]{60F10, 60J80}

\keywords{Branching Brownian motion, maximum, large deviation, two-type, reducible}

\maketitle

\section{Introduction and main results}\label{sec:introduction}

Branching Brownian motion (BBM) is a spatial branching process of great interests in recent years. On the one hand, its connection with the partial differential Fisher-Kolmogorov-Petrovsky-Piskunov (F-KPP) equation brings out many studies on both probabilistic and analytic sides, see for instance, \cite{ABBS13, McK75, Bra83, LS87, ABK13}. On the other hand, it is the fundamental model to understand the BBM-universality class which includes the 2-dim Gaussian free field \cite{BL16, BDZ16} and 2-dim cover times \cite{DPRZ04}, etc.

\smallskip

A one-dimensional  standard binary branching Brownian motion is a continuous-time particle system on the real line which can be constructed as follows. It starts with one individual located at the origin at time $0$ that moves according to a Brownian motion with variance $\sigma^2>0$. After an independent exponential time of parameter $\beta$, the initial particle dies and gives birth to 2 children that start at the position their parent occupied at its death. These $2$ children then move according to independent Brownian motions with variance $\sigma^2$ and give birth independently to their own children at rate $\beta$. The particle system keeps evolving in this fashion for all time.
\smallskip

In this work, we shall study the two-type reducible branching Brownian motion. This is a particle system on the real line in which particles possess a type in addition with their position. Particles of type $1$ move according to Brownian motions with variance $\sigma^2$ and branch at rate $\beta$ into two children of type $1$. Additionally, they give birth to particles of type $2$ at rate $\alpha$. Particles of type $2$ move according to Brownian motions with variance $\sigma_2^2$ and branch at rate $\beta_2$, but cannot give birth to descendants of type $1$. Up to a dilation of time and space, in this work we shall always assume that $\beta_2=\sigma_2=1$.  This model has been studied extensively by many authors; see \cite{Bi12, Ho14,Ho15, MR23}.

With abuse of notation, for all $t \geq 0$, we denote by $\cN_t$ the collection of individuals alive at time $t$ for both the standard BBM and the two-type reducible BBM. And denote by $\cN_t^1$ and $\cN_t^2$ the set of particles of type 1 and type 2 at time $t$, respectively.  For any $u\in \cN(t)$ and $s \leq t$, let $X^u_s$ denote the position at time $s$ of the individual $u$ or its ancestor alive at that time. The maximum of the branching Brownian motion at time $t$ is defined as
\[\displaystyle
  M_t:=\max\{X^u_t: u\in \cN(t)\}.
\]

To distinguish the two-type reducible BBM and the standard BBM,  we shall regard the standard BBM as $(\sigma^2,\beta)$-BBM to emphasize the variance of the Brownian motion and the branching rate. We also denote by $\P_{\sigma^2, \beta}$ its law and write $\bar{X}^u_s$ for the position at time $s$ of the individual $u$ of a $(\sigma^2, \beta)$-BBM.
For $(\sigma^2, \beta)$-BBM,  the asymptotic behaviour of
\[
\bar{M}_t:=\max\{\bar{X}_t^u: u\in \cN(t)\},
\]
as $t \to \infty$, has been subjected to intense study, partly due to its link with the F-KPP reaction-diffusion equation, defined as
\begin{equation}
  \label{eqn:deffkpp}
\partial_t u(t, x)=\frac{1}{2}\partial^2_xu(t, x)+(1-u(t,x))-(1-u(t,x))^2,\quad t\geq0,\, x\in {\mathbb R}.
\end{equation}
Precisely, McKean \cite{McK75} showed that 
\[\P_{\sigma^2, 1}(\bar{M}_t \ge \sigma x)=\bar{u}(t,  x),\] 
where the function $(t,x) \mapsto \bar{u}(t, x)$ is the unique solution of \eqref{eqn:deffkpp} with initial condition $\bar{u}(0, x) = \ind_{\{x < 0\}}$. With the help of \eqref{eqn:deffkpp}, Bramson \cite{Bra83} proved that for a $(\sigma^2, \beta)$-BBM,
\[
\bar{M}_t-\sqrt{2 \sigma^2 \beta} t+\frac{3}{2 \sqrt{2 \beta / \sigma^2}} \ln t,
\]
converges weakly, as $t\rightarrow\infty$. In particular, uniformly in $x \in \R$, as $t\rightarrow\infty$,
\begin{eqnarray}\label{Cvgmax}
1-\bar{u}(t, \bar{m}_t+x)=\P_{\sigma^2, 1}\left(\frac{\bar{M}_t}{\sigma}-\sqrt{2}t+\frac{3}{2\sqrt{2}}\ln t\leq x \right)\longrightarrow \bar{\omega}(x),
\end{eqnarray}
where $ \bar{\omega}$ is the travelling wave solution of the F-KPP equation at speed $\sqrt{2}$, which satisfies
\[
  \frac{1}{2}  \bar{\omega}'' + \sqrt{2}  \bar{\omega}' -  \bar{\omega}(1 -  \bar{\omega}) = 0.
\]
 Lalley and Sellke \cite{LS87}  characterized  $\bar{\omega}$ in terms of a random shift of the Gumbel distribution by introducing the so-called derivative martingale.

Parallel results of \eqref{Cvgmax} for the two-type reducible BBM were obtained in \cite{BeMa21}. To state their results, we need to decompose the state space $(\beta, \sigma^2)\in {\mathbb R}_+^2$ into three regions (see Figure 1 below for the phase diagram):

\begin{align*}
\mathcal{C}_\text{\uppercase\expandafter{\romannumeral1}} & =\left\{\left(\beta, \sigma^2\right): \sigma^2>\frac{\mathbb{1}_{\{\beta \leq 1\}}}{\beta}+\mathbb{1}_{\{\beta>1\}} \frac{\beta}{2 \beta-1}\right\}, \\
\mathcal{C}_{\text{\uppercase\expandafter{\romannumeral2}}} & =\left\{\left(\beta, \sigma^2\right): \sigma^2<\frac{\mathbb{1}_{\{\beta \leq 1\}}}{\beta}+\mathbb{1}_{\{\beta>1\}}(2-\beta)\right\}, \\
\mathcal{C}_{\text{\uppercase\expandafter{\romannumeral3}}} & =\left\{\left(\beta, \sigma^2\right): \sigma^2+\beta>2 \text { and } \sigma^2<\frac{\beta}{2 \beta-1}\right\} .
\end{align*}

\begin{figure}
\centering
\includegraphics[scale=0.28]{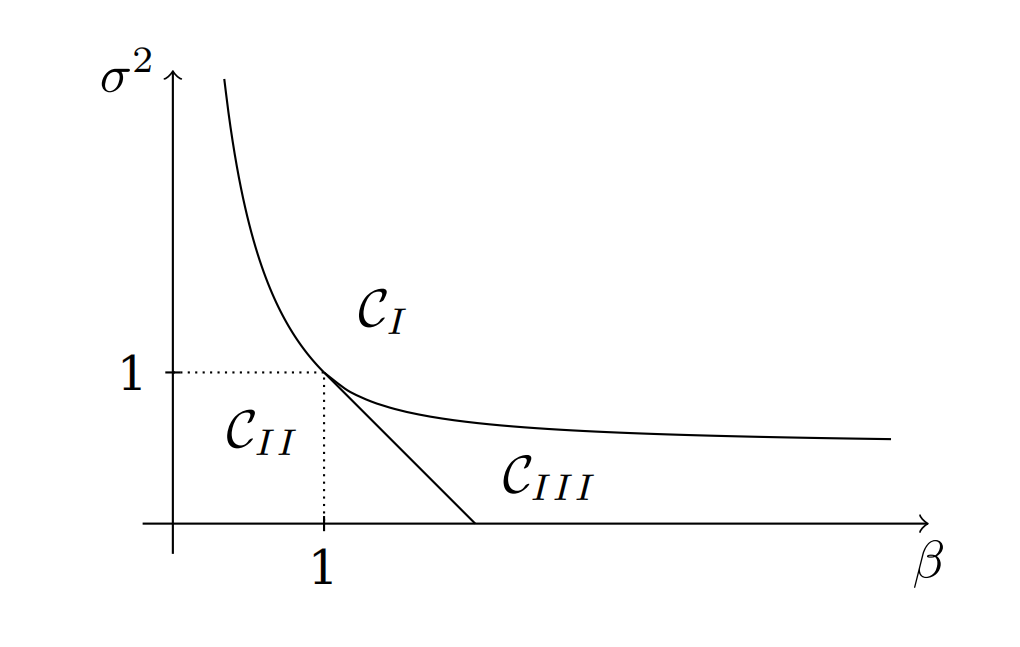}
\caption{Phase diagram of the two-type reducible BBM}
\end{figure}

\noindent From Belloum and Mallein \cite{BeMa21}, we have:
\[
M_t-m_t
\]
converges weakly, as $t\rightarrow\infty$, where
\begin{align*}
m_t=\begin{cases}
\sqrt{2 \sigma^2 \beta} t-\frac{3}{2 \sqrt{2 \beta / \sigma^2}} \ln t,\quad & (\beta, \sigma^2)\in \mathcal{C}_{\text{\uppercase\expandafter{\romannumeral1}}},\cr
\sqrt{2} t-\frac{3}{2 \sqrt{2}} \ln t,\quad
& (\beta, \sigma^2)\in \mathcal{C}_{\text{\uppercase\expandafter{\romannumeral2}}},\cr
\frac{\sigma^2-\beta}{\sqrt{2\left(1-\sigma^2\right)(\beta-1)}} t,\quad 
&(\beta, \sigma^2)\in \mathcal{C}_{\text{\uppercase\expandafter{\romannumeral3}}}.
\end{cases}
\end{align*}
From above, we see that if $\left(\beta, \sigma^2\right) \in \mathcal{C}_I$, the speed of the two-type reducible BBM is $\sqrt{2 \beta \sigma^2}$, which is the same as particle s of type $1$ alone, ignoring births of particles of type $2$.  Conversely, if $\left(\beta, \sigma^2\right) \in \mathcal{C}_{\text{\uppercase\expandafter{\romannumeral2}}}$, then the speed of the process is $\sqrt{2}$, equal to the speed of a single BBM of particles of type $2$.  Finally, if $\left(\beta, \sigma^2\right) \in \mathcal{C}_{\text{\uppercase\expandafter{\romannumeral3}}}$, the speed of the process is a mixture of the long-time asymptotic of the processes of particles of type $1$ and $2$,  which is larger than $\max \left(\sqrt{2}, \sqrt{2 \beta \sigma^2}\right)$.

We remark here that before the work of Belloum and Mallein \cite{BeMa21}, Biggins' work \cite{Bi12} was the first to consider the maximum of multi-type reducible branching random walks. After the work of Belloum and Mallein \cite{BeMa21}, as $\mathcal{C}_I, \mathcal{C}_{II}, \mathcal{C}_{III}$ does not cover all the possible values for $(\beta, \sigma^2)\in (0,\infty)^2$, Belloum \cite{Belloum2022} studied the case of $\beta=\sigma^2=1$. Later, Ma and Ren \cite{MR23} studied the other cases and completed the phase diagram.

 In this article, we study the large deviation probabilities for the maximum of a two-type reducible BBM. That is the convergence rate of the following probability 
 \[
 \P(M_t> \theta m_t),\quad \theta>1,
 \]
as $t\rightarrow\infty$. We divide the main result of our article into three theorems, one for each area the pair $\left(\beta, \sigma^2\right)$ belongs to.


\begin{thm}\label{ThmcaseI}

Assume $\left(\beta, \sigma^2\right) \in \mathcal{C}_\text{\uppercase\expandafter{\romannumeral1}}$. Then
the following limit exists for any $\theta>1$,

$$\lim _{ t\to \infty}\frac{1}{t}\ln \P\left(M_t\geq \sqrt{2 \beta \sigma^2\theta^2 }\cdot  t \right)=A,$$

\noindent where if    $\sigma ^2\geq 1$, then

$$ A=\beta\left(1-\theta^2\right),$$

\noindent and if $\beta > 1$ and $\sigma ^2< 1$, then we have

$$A=\begin{cases}\beta\left(1-\theta^2\right), & \theta^2\leq \frac{(\beta -1)\sigma ^2}{\left(1-\sigma^2\right)  \beta}, \\
1-\theta^2 \beta \sigma^2, & \theta^2\geq \frac{(\beta-1) }{\left(1-\sigma^2\right) \sigma ^2\beta} ,\\
\frac{-2 \sqrt{\beta(\beta-1)\left(1-\sigma^2\right)} \sigma \theta+\beta-\sigma^2}{1-\sigma^2}, &  \frac{(\beta -1)\sigma ^2}{\left(1-\sigma^2\right)  \beta}< \theta^2< \frac{(\beta-1) }{\left(1-\sigma^2\right) \sigma ^2\beta}.\end{cases}$$
\end{thm}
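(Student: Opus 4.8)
The plan is to establish matching upper and lower bounds on $\frac1t\ln\P(M_t\ge\sqrt{2\beta\sigma^2}\,\theta t)$. The key structural observation is that a particle of type $2$ at time $t$ is obtained by first running a type-$1$ particle up to some time $s\le t$, at which point a type-$2$ child is born, and then running a standard $(1,1)$-BBM from that space-time point for the remaining time $t-s$. Hence there are essentially two mechanisms that can produce an unusually high particle: either a type-$1$ particle travels abnormally far (cost governed by the large-deviation rate of a $(\sigma^2,\beta)$-BBM), or a type-$1$ particle travels moderately far in time $s$, spawns a type-$2$ descendant, and that type-$2$ subtree travels abnormally far in the remaining time. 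Optimizing over the split time $s$ and over the intermediate height produces the piecewise formula, and the three regimes for $\beta>1$, $\sigma^2<1$ correspond exactly to the cases where the optimal $s$ is $0$, is $t$, or is interior.

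First I would record the single-type input. For a $(\sigma^2,\beta)$-BBM one has $\frac1t\ln\P_{\sigma^2,\beta}(\bar M_t\ge a t)\to \beta - a^2/(2\sigma^2)$ for $a\ge\sqrt{2\beta\sigma^2}$ (this is the classical upper large-deviation rate for BBM, following from a first-moment/many-to-one computation together with a matching second-moment or direct construction lower bound). Applying this with $\sigma^2,\beta$ gives the ``type-$1$ only'' candidate rate $\beta(1-\theta^2)$ when $a=\sqrt{2\beta\sigma^2}\,\theta$, and applying it with the type-$2$ parameters $\sigma^2=\beta=1$ gives rate $1-a^2/2$. For the mixed strategy, if the type-$1$ ancestor reaches height $y s$ at time $s=\rho t$ (this costs $\rho t(\beta - y^2/(2\sigma^2))$ when $y\ge\sqrt{2\beta\sigma^2}$, and costs $\approx\rho t\,\beta$ with no spatial penalty when $y\le\sqrt{2\beta\sigma^2}$ since slow type-$1$ particles are abundant — here also the density of type-$2$ birth events contributes only lower-order terms), and then the type-$2$ subtree must cover the remaining height $\sqrt{2\beta\sigma^2}\,\theta t - ys$ in time $(1-\rho)t$, at additional cost $(1-\rho)t\big(1 - (\sqrt{2\beta\sigma^2}\,\theta - y\rho)^2/(2(1-\rho)^2)\big)$. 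The rate $A$ is then $\sup$ over admissible $\rho\in[0,1]$ and $y$ of the sum of these two (negative) rates; a Lagrange/calculus optimization gives the interior critical point, yielding the middle branch $\big({-2\sqrt{\beta(\beta-1)(1-\sigma^2)}\,\sigma\theta+\beta-\sigma^2}\big)/(1-\sigma^2)$, while the boundary cases $\rho=0$ and $\rho=1$ give the other two branches; comparing the three and checking which dominates in each $\theta$-range produces the stated thresholds $\theta^2=\frac{(\beta-1)\sigma^2}{(1-\sigma^2)\beta}$ and $\theta^2=\frac{\beta-1}{(1-\sigma^2)\sigma^2\beta}$. One must also verify that in region $\mathcal{C}_{\text{\uppercase\expandafter{\romannumeral1}}}$ with $\beta\le1$ or $\sigma^2\ge1$ the type-$1$-only strategy always wins, which is a direct comparison of the concave quadratic $\beta(1-\theta^2)$ against the mixed optimum.

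For the rigorous \emph{upper bound} I would use a union bound over type-$1$ and type-$2$ particles. The type-$1$ contribution is handled directly by the single-type estimate. For type-$2$ particles I would discretize the birth time into $O(t/\delta)$ intervals and the intermediate height into $O(t/\delta)$ levels, use the many-to-one lemma for the type-$1$ ancestor to bound the expected number of type-$2$ birth events with a given (coarse) ancestor trajectory, then apply the single-type first-moment bound to the type-$2$ subtree conditionally; summing the (exponentially many, but only polynomially-in-$t$ after discretization) contributions and letting $\delta\to0$ gives $\limsup\frac1t\ln\P\le A$. For the \emph{lower bound} I would fix the near-optimal $(\rho^\star,y^\star)$, show that with probability $e^{o(t)}$ there is a type-$1$ particle near height $y^\star\rho^\star t$ at time $\rho^\star t$ that has produced at least one type-$2$ child (using independence of the type-$2$ immigration from the type-$1$ motion, and that the expected number of such children along a typical ray is of order $\alpha\rho^\star t$), and then invoke the single-type lower bound for that type-$2$ subtree to reach the required height in the remaining time; a second-moment argument or the standard BBM large-deviation lower bound construction makes this precise.

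The main obstacle I anticipate is the lower bound in the mixed regime: one needs simultaneous control that a type-$1$ particle is at the right intermediate location \emph{and} has spawned a type-$2$ descendant \emph{and} that descendant's subtree then performs its own large deviation, with all three events combining without extra exponential cost. Making the ``slow type-$1$ particle with a type-$2$ child'' step quantitative — i.e.\ showing the cost is exactly $\rho t\beta$ and not more — requires a careful truncated second-moment computation along spine trajectories, and matching it to the upper bound's discretization is where the bulk of the technical work lies. The pure boundary cases ($\rho^\star\in\{0,1\}$) are comparatively easy since they reduce to the already-known single-type large deviations applied to type-$1$ or type-$2$ alone.
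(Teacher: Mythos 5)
Your overall architecture matches the paper's: a union bound over types with a many-to-one computation for the upper bound, and a two-strategy lower bound in which the mixed strategy optimizes over the birth time $s=\rho t$ of the relevant type-$2$ ancestor and over its height at that time; the three branches of $A$ do indeed correspond to the optimizer sitting at $\rho=1$, at $\rho=0$, or in the interior. The upper bound as you describe it is sound: the first moment of the number of type-$2$ particles above the level is exactly $\alpha\int_0^t e^{\beta s+(t-s)}\P\bigl(\sigma B_s+B_t-B_s\ge\sqrt{2\beta\sigma^2}\,\theta t\bigr)\,\mathrm{d}s$, and the Gaussian with variance $\sigma^2 s+(t-s)$ already performs your optimization over the intermediate height, so your discretization is an unnecessary but harmless detour.

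The gap is in the lower bound for the interior regime, and it originates in your rate bookkeeping for the type-$1$ phase. You assert that reaching intermediate height $y\rho t$ with $y\le\sqrt{2\beta\sigma^2}$ ``costs $\approx\rho t\beta$ with no spatial penalty''. There is a spatial penalty: by the level-set asymptotics \eqref{levelset}, the number of type-$1$ particles above $y\rho t$ at time $\rho t$ is $e^{(\beta-y^2/(2\sigma^2))\rho t+o(t)}$, not $e^{\beta\rho t}$. With the correct count, the variational problem $\sup_{\rho,y}\bigl[\rho\bigl(\beta-\tfrac{y^2}{2\sigma^2}\bigr)+(1-\rho)\bigl(1-\tfrac{(\sqrt{2\beta\sigma^2}\theta-y\rho)^2}{2(1-\rho)^2}\bigr)\bigr]$ reduces, after optimizing in $y$, exactly to $\max_{0\le\rho\le1}f_1(\rho)$ with $f_1$ as in \eqref{fu}; with your count it does not, and it will not return the stated middle branch. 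Correspondingly, your construction --- exhibit one type-$1$ particle at the right space-time point with one type-$2$ child and apply the single-type lower bound to that one subtree --- only yields the exponent $(1-u_0)\bigl(1-\tfrac{(\sqrt{2\beta\sigma^2}\theta-x_0)^2}{2(1-u_0)^2}\bigr)=f_1(u_0)-\beta_0$, where $\beta_0=\beta u_0-x_0^2/(2\sigma^2u_0)>0$ at the optimum (the optimal intermediate height is strictly subcritical for the type-$1$ cloud), which falls strictly short of the upper bound. What is needed is to use all $e^{(\beta_0-\varepsilon)t}$ type-$1$ particles above the optimal level at time $u_0t$, together with the conditional Poisson structure of type-$2$ births, to obtain that many independent type-$2$ subtrees each attempting the remaining deviation; multiplying the per-subtree success probability by this count gives $\beta_0+(1-u_0)(1-\cdots)=f_1(u_0)$. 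Equivalently, a truncated second-moment argument must be run on the total number of successful type-$2$ particles, not on a single subtree. The boundary cases $\rho^\star\in\{0,1\}$ and the parameter ranges where the type-$1$-only strategy wins are fine as you describe them.
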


\begin{thm}\label{ThmcaseII}

  Assume $\left(\beta, \sigma^2\right) \in \mathcal{C}_\text{\uppercase\expandafter{\romannumeral2}}$. Then the following limit exists for any $\theta>1$,

  $$\lim _{ t\to \infty}\frac{1}{t}\ln \P\left(M_t\geq \sqrt{2}\theta t \right)=:A,$$

  \noindent where if  $\sigma ^2 \leq   1$, we have

  $$ A=1-\theta^2 ,$$

  \noindent and if $\beta \leq 1,\, \sigma ^2 > 1$, we have

  $$A=\begin{cases} 1- \theta^2, & \theta^2\leq \frac{(1-\beta) }{\left(\sigma^2-1\right) },
  \cr
    \beta -\frac{\theta ^2}{\sigma ^2,} & \theta^2\geq \frac{(1-\beta) }{\left(\sigma^2-1\right) }\sigma ^4, 
    \cr
    \frac{2 \sqrt{(1-\beta)\left(\sigma^2-1\right)}  \theta+\beta-\sigma^2}{1-\sigma^2}, &  \frac{(1-\beta) }{\left(\sigma^2-1\right) } <  \theta^2 <   \frac{(1-\beta) }{\left(\sigma^2-1\right) }\sigma ^4.\end{cases}$$
  \end{thm}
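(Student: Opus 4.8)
The plan is to follow the same strategy as in the proof of Theorem~\ref{ThmcaseI}, transferring the analysis to the regime where the dominant contribution comes from particles of type $2$. The starting point is the many-to-one type decomposition: a particle of type $2$ at time $t$ at a high position is produced by an ancestral type-$1$ particle that branches off a type-$2$ child at some time $rt$, $r\in[0,1]$, at some intermediate height $a\sqrt{2}t$, after which the type-$2$ subtree runs for time $(1-r)t$ and must reach $\sqrt{2}\theta t$. Thus the upper bound will follow from a first-moment estimate of the form
\begin{equation*}
\P\left(M_t\ge \sqrt{2}\theta t\right)\le \P\left(\bar M^{(1)}_t\ge \sqrt{2}\theta t\right)+\int_0^1\!\!\int_{\R}\E\big[\#\{\text{type-}2\text{ births near }(rt,a\sqrt2 t)\}\big]\,\P_{1,1}\!\left(\bar M_{(1-r)t}\ge (\theta-a)\sqrt2 t\right)\,\mathrm{d}a\,\mathrm{d}r,
\end{equation*}
where the first term (type-$1$ particles only, i.e.\ a $(\sigma^2,\beta)$-BBM) is handled by the classical Chernoff/Biggins estimate giving rate $\beta-\theta^2\beta\sigma^2=\beta(1-\theta^2\cdot\text{something})$ — here one checks that under $\mathcal{C}_{\text{\uppercase\expandafter{\romannumeral2}}}$ this term is never the winner, consistent with $\sqrt{2}<\sqrt{2\beta\sigma^2}$ failing. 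The exponential growth rate of the expected number of type-$1$ particles near height $a\sqrt2 t$ at time $rt$ is $rt(\beta-a^2/\sigma^2)$ (valid while $a\le \sqrt{2}\sigma^2\beta$-type constraints hold), and the type-$2$ BBM contributes $(1-r)t(1-(\theta-a)^2/(1-r)^2)$ when $\theta-a\ge (1-r)$, and $(1-r)t$ times nothing extra (speed $\sqrt2$ is reached, probability $O(1)$) when $\theta-a\le(1-r)$. So $A$ is the value of the variational problem
\begin{equation*}
A=\sup_{r\in[0,1],\,a\in\R}\left\{ r\Big(\beta-\frac{a^2}{\sigma^2}\Big)+(1-r)-\frac{(\theta-a)^2}{1-r}\,\mathbf{1}_{\{\theta-a\ge 1-r\}}\right\},
\end{equation*}
with the convention that when $\theta-a<1-r$ the last term is dropped.

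The next step is to solve this optimization explicitly. First one observes that if the unconstrained optimum has $r=0$ (no type-$1$ segment), the problem reduces to a single type-$2$ BBM started at the origin, giving the candidate value $1-\theta^2$; if it has $r=1$, it reduces to a pure type-$1$ contribution, which is dominated. The interesting interior critical point is found by Lagrange multipliers: differentiating in $a$ gives $-2ra/\sigma^2+2(\theta-a)/(1-r)=0$, and differentiating in $r$ gives $\beta-a^2/\sigma^2-1-(\theta-a)^2/(1-r)^2=0$. Solving this $2\times2$ system yields the middle branch $\big(2\sqrt{(\beta-1)(1-\sigma^2)}\,\theta+\beta-\sigma^2\big)/(1-\sigma^2)$, and the boundary cases — where the maximizing $a$ hits the edge of the region on which the type-$1$ large-deviation rate $\beta-a^2/\sigma^2$ is the correct one, respectively where the constraint $\theta-a=1-r$ becomes active — produce the two thresholds $\theta^2=(\beta-1)/(1-\sigma^2)$ and $\theta^2=(1-\beta)\sigma^4/(\sigma^2-1)$. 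One must check that these three branches glue continuously at the thresholds and that the sign conditions ($\beta\le 1$, $\sigma^2>1$, so $\beta-1<0$ and $1-\sigma^2<0$, hence the ratios are positive) make the picture consistent; in the complementary region $\beta>1$ or $\sigma^2\le 1$ the interior critical point is infeasible and the optimum is always $r=0$, giving $A=1-\theta^2$.

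The lower bound is proved by a truncated second-moment (Paley--Zygmund) argument along the optimal trajectory: one forces a type-$1$ ancestor to follow the straight line to $(rt,a\sqrt2 t)$ with the prescribed exponential cost, conditions on a type-$2$ birth there, and then uses the known lower-deviation / convergence estimates for the type-$2$ BBM (Theorem~\ref{ThmcaseII}'s inputs coming from Bramson's result \eqref{Cvgmax} and standard BBM large-deviation lower bounds) to reach $\sqrt2\theta t$ with the complementary cost. Restricting to a well-chosen sub-event with controlled second moment gives a matching exponential lower bound. The main obstacle, as in Case~I, is the bookkeeping in the variational problem: verifying that the supremum is attained in the claimed regimes, that the feasibility constraints (nonnegativity of variances-like quantities, $a$ in the admissible range, $r\in[0,1]$) are exactly encoded by the stated thresholds, and that the three expressions agree at the breakpoints — plus ensuring the first-moment upper bound is tight, i.e.\ that no other combination (e.g.\ type-$1$ particle going up then type-$2$ going up at a different speed regime) beats the claimed $A$. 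I expect the analytic heavy lifting to be this case analysis rather than the probabilistic estimates, which are by now standard.
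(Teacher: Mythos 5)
Your overall strategy --- a first-moment upper bound organized by the birth time $rt$ and birth height of the type-2 ancestor, followed by an optimal-trajectory lower bound --- is the same as the paper's (which uses the many-to-one formula of Lemma~\ref{lemMany-one} for the upper bound and the Poissonian description of type-2 births in Lemma~\ref{lemPPP}, together with \eqref{cvgsbbm} and \eqref{levelset}, for the lower bound). However, the variational problem you actually write down is mis-specified, and the error sits exactly where you say the ``analytic heavy lifting'' is. The exponential rate of the expected number of type-1 particles at time $rt$ near height $a\sqrt{2}\,t$ is
\[
\beta r-\frac{a^{2}}{\sigma^{2}r},
\]
not $r\bigl(\beta-a^{2}/\sigma^{2}\bigr)$: the Gaussian cost of a displacement of order $t$ accrued over a time of order $rt$ carries a factor $1/r$, not $r$. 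With the correct cost, maximizing over $a$ for fixed $r$ gives $a=\sigma^{2}r\theta/\bigl(1+(\sigma^{2}-1)r\bigr)$ and collapses the two-parameter problem to exactly the paper's one-parameter function $f_2(r)=(\beta-1)r+1-\theta^{2}/\bigl((\sigma^{2}-1)r+1\bigr)$ of \eqref{2fu}, whose endpoint/interior analysis yields the three branches. With your cost, the stationarity condition in $r$ reads $\beta-a^{2}/\sigma^{2}-1-(\theta-a)^{2}/(1-r)^{2}=0$, which forces $\beta\ge 1$ and therefore has no solution in the regime $\beta\le 1$, $\sigma^{2}>1$ where the middle branch is supposed to appear; solving your $2\times 2$ system as written does not yield $\bigl(2\sqrt{(\beta-1)(1-\sigma^{2})}\,\theta+\beta-\sigma^{2}\bigr)/(1-\sigma^{2})$.

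A second, related slip: you assert that the pure type-1 term is ``never the winner'' in $\mathcal{C}_{\mathrm{II}}$, and you record its rate as $\beta-\theta^{2}\beta\sigma^{2}$. At level $\sqrt{2}\theta t$ the type-1 rate is $\beta-\theta^{2}/\sigma^{2}$ (your expression is the one for level $\sqrt{2\beta\sigma^{2}}\theta t$, i.e.\ Case I), and for $\beta\le 1$, $\sigma^{2}>1$, $\theta^{2}\ge(1-\beta)\sigma^{4}/(\sigma^{2}-1)$ this is precisely the answer $A=\beta-\theta^{2}/\sigma^{2}=f_2(1)$: when the type-1 motion is more diffusive, extreme deviations are cheapest along type-1 trajectories, with the type-2 conversion occurring only at the very end. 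The theorem's statement survives because $f_2(1)$ captures this contribution anyway, but the heuristic you use to discard the type-1 term is wrong and would mislead the case analysis. The lower bound via a truncated second moment along the optimal trajectory is a legitimate alternative to the paper's Poisson-process argument, but it too must be run with the corrected $(r,a)$ cost.
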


\begin{thm}\label{ThmcaseIII}
Assume $\left(\beta, \sigma^2\right) \in \mathcal{C}_\text{\uppercase\expandafter{\romannumeral3}}$. Then for any $\theta>1$, we have
 $$
\lim _{ t\to \infty}\frac{1}{t}\ln \P\left(M_t\geq \xi \theta t \right)=\begin{cases}
1-\frac{\xi^2\theta^2}{2}, & \theta^2\geq\frac{2(\beta-1)}{1-\sigma^2}\frac{1}{\xi^2},\\
  \beta-\frac{\xi^2\theta^2}{2 \sigma^2}, & \theta^2\leq \frac{2(\beta-1)}{1-\sigma^2}\frac{\sigma^4}{\xi^2},
  \cr
  \frac{\sqrt{2}(1-\beta)-\theta^2\left(\sigma^2-\beta\right)^2-1}{\sqrt{2}\left(\sigma^2-1\right)}+1, & 
  \frac{2(\beta-1)}{1-\sigma^2}\frac{\sigma^4}{\xi^2} < \theta^2< \frac{2(\beta-1)}{1-\sigma^2}\frac{1}{\xi^2}, \end{cases}
$$
where $\xi =\frac{\sigma ^2-\beta }{\sqrt{2(1-\sigma ^2)(\beta -1)} }. $
\end{thm}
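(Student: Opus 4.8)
The plan is to identify $A$ as the value of a one-dimensional variational problem encoding the optimal ``trajectory'' of an extremal particle. Since in $\mathcal C_{\mathrm{III}}$ one has $\beta>1$, $\sigma^2<1$ and $\xi^2=\frac{(\beta-\sigma^2)^2}{2(1-\sigma^2)(\beta-1)}$, the height $\xi\theta t$ exceeds both $\sqrt2\,t$ and $\sqrt{2\beta\sigma^2}\,t$, so the extremal particle is, apart from the pure type-$1$ deviation, a type-$2$ particle whose ancestral lineage spent a fraction $a\in[0,1]$ of its life as a type-$1$ particle moving ballistically to some position $yt$ at time $at$, gave birth there to a type-$2$ particle, whose type-$2$ subtree then travelled ballistically from $(at,yt)$ to $(t,\xi\theta t)$. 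Writing $g_1(a,y)=\beta a-\frac{y^2}{2\sigma^2 a}$ for the exponential rate of the number of type-$1$ particles near $yt$ at time $at$ (and, when this is negative, of the probability of seeing at least one, by the sharp upper tail of the BBM maximum) and $g_2(b,z)=b-\frac{z^2}{2b}$ for the corresponding rate of a $(1,1)$-BBM over time $bt$ and displacement $zt$, the candidate value is
\[
A=\sup_{a\in[0,1],\,y\in\R}\Bigl(g_1(a,y)+g_2(1-a,\xi\theta-y)\Bigr)=\sup_{a\in[0,1]}\phi(a),\qquad
\phi(a):=\beta a+(1-a)-\frac{\xi^2\theta^2}{2\bigl(1-(1-\sigma^2)a\bigr)},
\]
the maximisation over $y$ being the elementary one $\min_y\bigl(\tfrac{y^2}{\sigma^2 a}+\tfrac{(\xi\theta-y)^2}{1-a}\bigr)=\tfrac{\xi^2\theta^2}{1-(1-\sigma^2)a}$, attained at $y=\tfrac{\sigma^2 a\,\xi\theta}{1-(1-\sigma^2)a}$.

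For the upper bound, by Markov's inequality $\P(M_t\ge\xi\theta t)\le \E[\#\{u\in\cN_t^1:X_t^u\ge\xi\theta t\}]+\E[\#\{u\in\cN_t^2:X_t^u\ge\xi\theta t\}]$. The first term equals $e^{\beta t}\P(\sigma B_t\ge\xi\theta t)=\exp\{t\,\phi(1)+o(t)\}$, and the many-to-one lemma for the two-type BBM gives
\[
\E\bigl[\#\{u\in\cN_t^2:X_t^u\ge\xi\theta t\}\bigr]=\int_0^t\alpha\,e^{\beta s+(t-s)}\,\P\bigl(\sigma B_s+\widetilde B_{t-s}\ge\xi\theta t\bigr)\,\d s,
\]
with $B,\widetilde B$ independent standard Brownian motions; since $\sigma B_s+\widetilde B_{t-s}\sim\mathcal N(0,\sigma^2 s+(t-s))$, a Laplace--Varadhan estimate of this integral with the substitution $s=at$ yields $\exp\{t\sup_{a\in[0,1]}\phi(a)+o(t)\}$. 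Hence $\limsup_{t\to\infty}\frac1t\ln\P(M_t\ge\xi\theta t)\le\sup_{a\in[0,1]}\phi(a)$.

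For the matching lower bound one realises the optimal strategy. Let $a^\star$ be the maximiser of $\phi$ and $y^\star$ the associated optimal position. Using the classical single-BBM large-deviation estimates --- for a $(\sigma^2,\beta)$-BBM the number of particles near $vt$ at time $t$ is $e^{t(\beta-v^2/2\sigma^2)+o(t)}$ when the exponent is positive, and otherwise at least one exists with probability $e^{t(\beta-v^2/2\sigma^2)+o(t)}$, and likewise for the $(1,1)$-BBM --- one restricts to the event that some type-$1$ particle lies within $\eta t$ of $y^\star t$ at time $a^\star t$, that a type-$2$ child is born along such a lineage near that time and position (probability bounded below by a positive constant, $\alpha>0$ being fixed), and that the $(1,1)$-BBM issued from that child reaches $\xi\theta t$ by time $t$. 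As the type-$2$ subtrees attached to genealogically disjoint type-$1$ particles are independent, a union bound (or truncated second moment) over a suitable family of such particles combines these events into $\P(M_t\ge\xi\theta t)\ge e^{t(g_1(a^\star,y^\star)+g_2(1-a^\star,\xi\theta-y^\star))+o(t)}=e^{tA+o(t)}$; the boundary cases $a^\star\in\{0,1\}$ reduce to a single type-$2$, respectively type-$1$, deviation and are handled directly. This lower bound is the main obstacle: one must control the correlations between the type-$2$ subtrees and splice two consecutive BBM deviations --- a type-$1$ one and then a type-$2$ one --- with the intermediate time and height optimised and all errors uniformly $e^{o(t)}$; whether $g_1(a^\star,y^\star)$ is positive (the type-$1$ velocity lies in the bulk) or negative (it is itself a deviation) only changes the bookkeeping.

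It remains to evaluate $\sup_{[0,1]}\phi$. One computes $\phi''(a)=-\xi^2\theta^2(1-\sigma^2)^2\bigl(1-(1-\sigma^2)a\bigr)^{-3}<0$ on $[0,1]$ (here $1-\sigma^2>0$ and $1-(1-\sigma^2)a\ge\sigma^2>0$), so $\phi$ is strictly concave. Its maximum is attained: at $a=0$ when $\phi'(0)\le0$, i.e.\ $\theta^2\ge\frac{2(\beta-1)}{(1-\sigma^2)\xi^2}$, giving $A=\phi(0)=1-\frac{\xi^2\theta^2}{2}$; at $a=1$ when $\phi'(1)\ge0$, i.e.\ $\theta^2\le\frac{2(\beta-1)\sigma^4}{(1-\sigma^2)\xi^2}$, giving $A=\phi(1)=\beta-\frac{\xi^2\theta^2}{2\sigma^2}$ --- a range which, for $\theta>1$, is empty precisely because $\sigma^2<\frac{\beta}{2\beta-1}$ in $\mathcal C_{\mathrm{III}}$; and otherwise at the interior root $a^\star$ of $\phi'$, characterised by $1-(1-\sigma^2)a^\star=\xi\theta\sqrt{\tfrac{1-\sigma^2}{2(\beta-1)}}$, which upon substituting back into $\phi$ and simplifying (using the value of $\xi^2$) gives the third closed-form expression of the statement; continuity of this value with the two endpoint cases at the ends of the interior range provides a check. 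The same computation carried out at the other two regimes reproduces the interior formulas of Theorems~\ref{ThmcaseI} and~\ref{ThmcaseII}.
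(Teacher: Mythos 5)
Your proposal is correct and follows essentially the same route as the paper: a first-moment/many-to-one upper bound reduced by Laplace's method to maximizing the same concave function (your $\phi$ is the paper's $f_3$), and a lower bound that realizes the optimal two-stage trajectory by counting type-1 particles at the intermediate space-time point via the level-set growth rate and exploiting the conditional independence of the type-2 subtrees together with the single-type BBM deviation estimate. Your side observations are also sound: the $\phi(1)$ regime is indeed vacuous for $\theta>1$ in $\mathcal{C}_{\mathrm{III}}$ because $\sigma^2<\beta/(2\beta-1)$, and your interior value simplifies to $\frac{\beta-1-(\beta-\sigma^2)\theta}{1-\sigma^2}+1$, which is what the paper's third displayed expression is evidently intended to be.
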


Let us mention here that in the context of standard $(1,1)$-BBM (i.e., $\sigma^2=1, \beta=1$), one would have: for any $\theta>1$ and $x\in {\mathbb R}$,
 \begin{align}\label{cvgsbbm}
 \lim_{t\rightarrow\infty}\sqrt{t}e^{(\theta^2-1)t}\P_{1,1}(\bar{M}_t\geq\sqrt{2}\theta t+x)&
 =\lim_{t\rightarrow\infty}\sqrt{t}e^{(\theta^2-1)t}\bar{u}(t, \sqrt{2}\theta t+x)
 \cr& =e^{\sqrt{2}x}C(\theta, \bar{u}),\end{align}
where $C(\theta, \bar{u})$ is a constant depending only on $\theta$ and $\bar{u}$; see \cite{CR88}, \cite{ABK11} and \cite{BBCM22}. In this work, we only obtain the exponential decay rates for the large deviation probabilities for the maximum of a two-type reducible BBM. It is possible to obtain a form as \eqref{cvgsbbm} with the help of the so-called truncated second moment method and the F-KPP equation. Our method may also apply to study the cases of  $(\beta, \sigma^2)\in \partial\mathcal{C}_\text{\uppercase\expandafter{\romannumeral1}} \cap \partial\mathcal{C}_\text{\uppercase\expandafter{\romannumeral2}}$ or $(\beta, \sigma^2)\in \partial\mathcal{C}_\text{\uppercase\expandafter{\romannumeral2}} \cap \partial\mathcal{C}_\text{\uppercase\expandafter{\romannumeral3}}$ or  $(\beta, \sigma^2)\in \partial\mathcal{C}_\text{\uppercase\expandafter{\romannumeral1}} \cap \partial\mathcal{C}_\text{\uppercase\expandafter{\romannumeral3}}$; see \cite{MR23} for related results.

In the next section, we will present some technical lemmas borrowed from the existing literature.  The proofs for Theorems \ref{ThmcaseI}, \ref{ThmcaseII} and \ref{ThmcaseIII} will be given in Sections \ref{SecCaseI}, \ref{SecCaseII} and \ref{SecCaseIII}, respectively.

\section{Technical lemmas}\label{sectl}
\begin{lmm}(\cite{GKS18}, \label{lemGSK}Lemma 2.1)
Let $X$ be a centered Gaussian random variable with variance $\sigma^2>0$. Then
\begin{eqnarray}\label{BMasy}\P(X>a)=(2\pi)^{-1/2}(\sigma/a)
e^{-\frac{a^2}{2\sigma^2}}\left(1+O(\sigma^2/a^2)\right),\quad \text{as } a/\sigma\rightarrow\infty,
\end{eqnarray}
with the r.h.s. above without error term being an upper bound valid for any $a>0.$
\end{lmm}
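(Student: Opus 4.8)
The plan is to reduce to the standard normal and then run two short integration-by-parts estimates. Writing $X=\sigma Z$ with $Z$ a standard $\mathcal{N}(0,1)$ variable and setting $b:=a/\sigma$, we have $\P(X>a)=\P(Z>b)$, while $\sigma/a=1/b$, $a^{2}/(2\sigma^{2})=b^{2}/2$ and $\sigma^{2}/a^{2}=1/b^{2}$; the hypothesis $a/\sigma\to\infty$ is exactly $b\to\infty$. Hence it suffices to prove that
\[
\P(Z>b)=(2\pi)^{-1/2}\,b^{-1}\,e^{-b^{2}/2}\bigl(1+O(b^{-2})\bigr)\quad\text{as }b\to\infty,
\]
together with the non-asymptotic bound $\P(Z>b)\le(2\pi)^{-1/2}\,b^{-1}\,e^{-b^{2}/2}$ valid for all $b>0$; undoing the substitution then gives the lemma.

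For the upper bound I would use that $x\ge b>0$ implies $1\le x/b$, so
\[
\P(Z>b)=\frac{1}{\sqrt{2\pi}}\int_{b}^{\infty}e^{-x^{2}/2}\,\d x\;\le\;\frac{1}{\sqrt{2\pi}}\int_{b}^{\infty}\frac{x}{b}\,e^{-x^{2}/2}\,\d x\;=\;\frac{1}{\sqrt{2\pi}}\,\frac{1}{b}\,e^{-b^{2}/2},
\]
which is precisely the claimed upper bound (and is valid for every $a>0$ after rescaling, since no largeness of $b$ was used).

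For the matching lower-order term, integrate by parts writing $e^{-x^{2}/2}=x^{-1}\cdot\bigl(x e^{-x^{2}/2}\bigr)$:
\[
\int_{b}^{\infty}e^{-x^{2}/2}\,\d x=\frac{e^{-b^{2}/2}}{b}-\int_{b}^{\infty}\frac{e^{-x^{2}/2}}{x^{2}}\,\d x,
\]
and the remaining integral is nonnegative and, again using $x\ge b$, bounded above by $b^{-2}\int_{b}^{\infty}e^{-x^{2}/2}\,\d x\le b^{-3}e^{-b^{2}/2}$ by the upper bound just established. Thus $(1-b^{-2})\,b^{-1}e^{-b^{2}/2}\le\int_{b}^{\infty}e^{-x^{2}/2}\,\d x\le b^{-1}e^{-b^{2}/2}$, which yields the asymptotic with a uniform $1+O(b^{-2})$; if a full two-sided expansion is desired one iterates the integration by parts once more.

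There is no genuine obstacle here: this is the classical Gaussian tail (Mills ratio) estimate, and the argument above is elementary and entirely non-asymptotic, so the $O(\sigma^{2}/a^{2})$ error is automatically uniform in the stated regime. One could equally well simply invoke a standard reference, as the authors do by citing \cite{GKS18}.
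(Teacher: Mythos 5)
Your proof is correct and complete: the reduction to the standard normal, the bound $\int_b^\infty e^{-x^2/2}\,\d x\le b^{-1}e^{-b^2/2}$ via $1\le x/b$, and the single integration by parts giving the two-sided estimate $(1-b^{-2})b^{-1}e^{-b^2/2}\le\int_b^\infty e^{-x^2/2}\,\d x\le b^{-1}e^{-b^2/2}$ are all accurate, and together they yield exactly \eqref{BMasy} with the error term uniform as $a/\sigma\to\infty$, plus the universal upper bound for all $a>0$. The paper itself gives no proof of this lemma --- it is quoted verbatim from \cite{GKS18} --- so there is nothing to compare against; your elementary Mills-ratio argument is the standard one and serves as a valid self-contained substitute for the citation.
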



 If $u \in \mathcal{N}_t^2$, we denote by $T(u)$ the time at which the oldest ancestor of type 2 of $u$ was born. The set $\mathfrak{B}$ of particles of type $2$ that are
born from a particle of type $1$, which can be defined as 
\[\mathfrak{B}:=\{u\in \cup_{t\geq 0} \cN^2_t: T(u)=b_u\},
\]
where $b_u$ is the birth time of $u$.
We observe that $\mathfrak{B}$ can be thought of as a Poisson point process with random intensity.
\begin{lmm}(\cite{BeMa21}, \label{lemPPP}Lemma 4.2)
Set ${\mathcal F}_1 =\sigma(X_t^u, u\in \cN_t^1,t\geq0)$. The point measure $\sum_{u\in \mathfrak{B}} \delta_{(X_s^u, s\leq T(u))}$
conditionally on ${\mathcal F}_1$  is a Poisson point process with intensity $\alpha {\rm d}t \times\sum_{u\in {\mathcal N}_t^1} \delta_{(X_s^u,\, s\leq t)}$.
\end{lmm}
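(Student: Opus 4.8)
The plan is to prove this by conditioning on $\mathcal{F}_1$ to freeze the type-1 genealogy and trajectories, and then to recognize the births of type-2 particles along type-1 lines as a superposition of independent Poisson clocks, to which the standard superposition/marking theorem for Poisson processes applies. First I would recall the defining construction of the two-type reducible BBM: each type-1 particle, during its lifetime, emits type-2 offspring according to an exponential clock of rate $\alpha$, and these clocks are driven by a source of randomness independent of the Brownian motions and of the rate-$\beta$ branching of type-1 particles. Consequently, conditionally on $\mathcal{F}_1=\sigma(X_t^u,u\in\cN_t^1,t\geq 0)$ --- which determines the entire type-1 tree, namely the lifetime interval $I_u=[b_u,d_u)$ and the trajectory of every type-1 edge $u$ --- the emission times along distinct edges form independent Poisson processes on $I_u$ of rate $\alpha$, independent of $\mathcal{F}_1$. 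Each element of $\mathfrak{B}$ corresponds to exactly one such emission time $\tau$ on exactly one edge $u$, for which $T(u)=b_u=\tau$.

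Next I would introduce the path-valued marking. To each emission (edge $u$, time $\tau$) I associate the stopped ancestral trajectory $(X_s^u,s\leq\tau)$ of the emitting type-1 line, viewed as a point in the space of continuous stopped paths. This map is $\mathcal{F}_1$-measurable, since the full ancestral path from the root to $u$ up to any time is read off from $\mathcal{F}_1$, and it records precisely the datum $(X_s^u,s\leq T(u))$ appearing in the statement (at the birth instant the new type-2 particle starts from its type-1 parent's position). Pushing the Poisson process on $I_u$ forward under this map yields, conditionally on $\mathcal{F}_1$, a Poisson point process on path space whose intensity is the image of $\alpha\,\mathrm{Leb}|_{I_u}$.

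Then I would invoke superposition. Because the emission clocks on distinct edges are conditionally independent Poisson processes, and because on any bounded time window only finitely many type-1 edges are present --- so the total rate is locally finite, $\alpha|\cN_t^1|<\infty$ almost surely --- their superposition is again a Poisson point process whose intensity is the sum of the per-edge intensities. Summing the pushforwards over all edges and re-indexing by the edge alive at each time $t$ reassembles the intensity into the time-sliced form $\alpha\,\mathrm{d}t\times\sum_{u\in\cN_t^1}\delta_{(X_s^u,s\leq t)}$, which is exactly the claimed expression.

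The main obstacle, I expect, is the careful set-up of the measurable path-mark space together with the verification that the pushforward of $\alpha\,\mathrm{Leb}$ over the disjoint edge-lifetimes recombines into the stated slicing $\alpha\,\mathrm{d}t\times\sum_{u\in\cN_t^1}\delta_{(\cdot)}$; the probabilistic content --- conditional independence of the clocks and the superposition theorem --- is routine once the model's construction is made precise.
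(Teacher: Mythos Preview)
Your proposal is correct and is essentially the standard argument for this type of result. However, note that the paper does not actually prove this lemma: it is stated as Lemma~\ref{lemPPP} in the section of technical lemmas and simply attributed to \cite{BeMa21}, Lemma~4.2, with no proof given. So there is nothing in the paper to compare against beyond the citation. Your sketch---freeze $\mathcal{F}_1$, observe that along each type-1 edge the type-2 births form an independent rate-$\alpha$ Poisson process on its lifetime interval, push forward by the $\mathcal{F}_1$-measurable path-valued mark, and superpose---is exactly how one proves such statements, and it matches the spirit of the construction in \cite{BeMa21}.
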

The results in the following proposition are variations of the so-called many-to-one lemma.
\begin{lmm}\label{lemMany-one}  For all measurable non-negative functionals $f$ and $g$, we have
\begin{align*}
\E
\left[\sum_{u\in \cN_t^1} f(X_s^u, s \leq  t)\right]
=
 e^{\beta t}\E\left[f(\sigma B_{s} , s\leq t)\right]
\end{align*}
and
\begin{align*}
&\E
\left[\sum_{u\in \cN_t^2} g((X_s^u, s \leq  t), T(u))\right]\cr
&\quad =
\alpha\int_0^t e^{\beta s+(t-s)}\E\left[g((\sigma B_{u\wedge s} +(B_u-B_{u\wedge s}), u\leq t),s)\right]{\rm d} s.
\end{align*}
\end{lmm}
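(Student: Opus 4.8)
The plan is to treat the two identities in turn, the second one resting on the first together with Lemma \ref{lemPPP}. The first identity is the classical many-to-one formula for the subsystem of particles of type~$1$, which is autonomous and is itself a $(\sigma^2,\beta)$-BBM (the extra births of type~$2$ at rate $\alpha$ do not affect the type-$1$ dynamics). I would obtain it by conditioning on the genealogical structure: since type-$1$ particles branch at rate $\beta$ into exactly two children, the counting process $(\#\cN_t^1)_{t\ge 0}$ is a Yule process of parameter $\beta$, so $\E[\#\cN_t^1]=e^{\beta t}$, and conditionally on the tree the trajectory of a uniformly tagged particle is a centered Brownian motion of variance $\sigma^2$ run for time $t$. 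Concretely one decomposes on the first branching time $\tau\sim\mathrm{Exp}(\beta)$: on $\{\tau>t\}$ there is a single particle with path $(\sigma B_s)_{s\le t}$, while on $\{\tau=r\le t\}$ the process restarts as two independent copies issued from $\sigma B_r$, and an induction on the number of branch points (or a Gronwall/Feynman--Kac argument for $F(t):=\E[\sum_{u\in\cN_t^1}f(X^u_s,s\le t)]$) closes the identity.

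For the second identity I would first peel off the type-$1$ part of each type-$2$ trajectory using the Poisson description of $\mathfrak{B}$, then reconstruct the type-$2$ subtrees. For $u\in\cN_t^2$ let $v(u)\in\mathfrak{B}$ be its oldest type-$2$ ancestor, born at time $T(u)=b_{v(u)}=:s\le t$; the map $u\mapsto v(u)$ partitions $\cN_t^2$ according to which point of $\mathfrak{B}$ a particle descends from, so there is no over-counting. Conditionally on $\mathcal{F}_1$ and on the point process $\mathfrak{B}$, the families $\{u\in\cN_t^2:v(u)=v\}$ over distinct $v\in\mathfrak{B}$ with $b_v\le t$ are independent, and each is a standard $(1,1)$-BBM of duration $t-b_v$ issued from the space-time point $(b_v,X^v_{b_v})$, with the part of the trajectory on $[0,b_v]$ frozen to that of $v$ (hence of its type-$1$ ancestor). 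Applying the first identity with $\sigma^2=\beta=1$ to each such subtree, the conditional expectation of its contribution equals
\[
e^{\,t-b_v}\,\E\!\left[\,g\big((Y_r,\,r\le t),\,b_v\big)\,\middle|\,\mathcal{F}_1,\mathfrak{B}\right],
\]
where $(Y_r)_{r\le t}$ agrees with $(X^v_r)_{r\le b_v}$ on $[0,b_v]$ and is continued by an independent variance-$1$ Brownian motion on $[b_v,t]$.

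Next I would apply Lemma \ref{lemPPP}: the Campbell (Mecke) formula for the Poisson point process $\sum_{v\in\mathfrak{B}}\delta_{(X^v_\cdot,\,b_v)}$ with conditional intensity $\alpha\,\d s\times\sum_{u\in\cN_s^1}\delta_{(X^u_\cdot,\,s)}$ turns the sum over $v\in\mathfrak{B}$ with $b_v\le t$ into $\alpha\int_0^t\d s\sum_{u\in\cN_s^1}(\cdots)$, the frozen part of the path now being that of the type-$1$ particle $u$ on $[0,s]$. Taking $\E[\,\cdot\,]$ over $\mathcal{F}_1$ and using the first identity once more, $\E[\sum_{u\in\cN_s^1}(\cdots)]=e^{\beta s}\,\E[(\cdots\ \text{with}\ \sigma B\ \text{in place of the type-1 path})]$, and combining the factors $e^{t-s}$ and $e^{\beta s}$, gives
\[
\alpha\int_0^t e^{\beta s+(t-s)}\,\E\!\left[g\big((\sigma B_{u\wedge s}+(B_u-B_{u\wedge s}),\,u\le t),\,s\big)\right]\d s,
\]
since on $[0,s]$ the path is $(\sigma B_u)_{u\le s}$ and on $[s,t]$ it is this value plus an independent Brownian increment of variance $u-s$, i.e.\ exactly $\sigma B_{u\wedge s}+(B_u-B_{u\wedge s})$. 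The step requiring the most care is the conditional independence bookkeeping in the decomposition over $\mathfrak{B}$ --- checking that, given $\mathcal{F}_1$ and $\mathfrak{B}$, the type-$2$ bushes are genuine independent standard BBMs started at the prescribed space-time points and that the appended Brownian motions are independent of $\mathcal{F}_1$ --- together with a routine monotone-class reduction to bounded (say continuous) $f,g$ that legitimizes the use of Campbell's formula for this random-intensity Poisson process.
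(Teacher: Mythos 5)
Your proposal is correct, and it in fact supplies the details that the paper omits: the paper's ``proof'' consists only of the remarks that the first identity is the usual many-to-one lemma and that the second is Proposition 4.1 of \cite{BeMa21}. Your route --- decomposing $\cN_t^2$ according to the oldest type-$2$ ancestor in $\mathfrak{B}$, applying the single-type many-to-one formula (with $\beta_2=\sigma_2^2=1$) to each type-$2$ bush to produce the factor $e^{t-s}$, invoking the Campbell/Mecke formula for the Poisson process of Lemma \ref{lemPPP} to convert the sum over $\mathfrak{B}$ into $\alpha\int_0^t\sum_{u\in\cN_s^1}(\cdots)\,\d s$, and then applying the type-$1$ many-to-one formula to get the factor $e^{\beta s}$ and the spine path $\sigma B_{u\wedge s}+(B_u-B_{u\wedge s})$ --- is exactly the standard argument behind the cited proposition, and the conditional-independence and monotone-class points you flag are indeed the only places where care is needed. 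No gap.
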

\begin{proof}
The first result is just the usual many-to-one lemma. The second can be found in Proposition 4.1 in \cite{BeMa21}. 
\end{proof}
\section{Proof of Theorem \ref{ThmcaseI}}\label{SecCaseI}
In this section, we consider the case that
\[(\beta, \sigma^2)\in\mathcal{C}_\text{\uppercase\expandafter{\romannumeral1}} =\left\{\left(\beta, \sigma^2\right): \sigma^2>\frac{\mathbb{1}_{\{\beta \leq 1\}}}{\beta}+ \frac{\beta\mathbb{1}_{\{\beta>1\}}}{2 \beta-1}\right\}.
\]

 Before going into the details, we would like to give a rough explanation about the general strategy to all proofs. Our method is quite simple and direct. For the upper bound, we just use the first moment argument. Then the task is reduced to understand the birth time $T(u)$ of the particle whose position at time $t$ is larger than $\sqrt{2 \beta \sigma^2\theta^2 }\cdot  t.$ For the lower bound, we may think that oldest ancestor of type $2$ of the maximal particle at time $t$ was born at some time $u_0 t$ in a position around $x_0 t$ for some $u_0$ and $x_0$.
So the limit $A$ is consist of two parts: the effort of some type $1$ particles to achieve position $x_0 t$ at time $u_0 t$ and the effort that some  type $2$ particles to achieve position $\sqrt{2 \beta \sigma^2\theta^2 }\cdot  t-x_0 t$ at time $t-u_0 t$. The same strategy also apply well to the other two cases.

\subsection{Upper bound}
\begin{proof}

First,
\begin{align}
 & \P\left(M_t \geqslant \sqrt{2 \beta \sigma^2\theta^2 }\cdot  t \right) \cr
= & \P\left(\exists\, u \in \cN_t, X_t^u \geqslant \sqrt{2 \beta \sigma^2\theta^2 }\cdot  t \right) \cr
\leqslant & \P\left(\exists\, u \in \cN_t^1, X_t^u \geqslant \sqrt{2 \beta \sigma^2\theta^2 }\cdot  t \right)+\P\left(\exists\, u \in \cN_t^2, X_t^u \geqslant \sqrt{2 \beta \sigma^2\theta^2 }\cdot  t \right) \cr
 =:&  \text{\uppercase\expandafter{\romannumeral1}}+\text{\uppercase\expandafter{\romannumeral2}}.
\end{align}

\noindent Then by Lemma \ref{lemMany-one} and \eqref{BMasy}, we have
\begin{align*}
  \text{\uppercase\expandafter{\romannumeral1}} & \leq e^{\beta t}  \P\left(\sigma B_t \geqslant\sqrt{2 \beta \sigma^2\theta^2 }\cdot  t \right)
  \cr 
  &\leq O(1) \cdot \frac{1}{\sqrt{t}} \cdot e^{\beta t} \cdot e^{-\frac{2 \beta \theta^2 t^2}{2 t}} \\
& =\frac{O(1)}{\sqrt{t}} \cdot e^{\beta\left(1-\theta^2\right) t}.  
\end{align*}

\noindent On the other hand, set\[\mathfrak{B}_t= \{u\in \mathfrak{B}: T(u)<t\}.\]
According to construction of our model, we have
\begin{align*}
    & \P\left(\exists\, u \in \cN_t^2: X_t^u \geqslant \sqrt{2 \beta \sigma^2 \theta^2} \cdot   t\right) \cr
  & =  \P\left(\exists u\, \in \mathfrak{B}_t : X_{T(u)}^u+M_{t-T(u)}^u \geqslant \sqrt{2 \beta \sigma^2 \theta^2} \cdot  t\right) \cr
 &=  1-\P\left(\forall\, u \in \mathfrak{B}_t : X_{T(u)}^u+M_{t-T(u)}^u < \sqrt{2 \beta \sigma^2 \theta^2} \cdot  t\right) \cr
&=  1-\E\left[\prod_{u \in \mathfrak{B}_t } \varphi_t\left(X_{T(u)}^u, T(u)\right)\right] \cr
&=  1-\E\left[\exp \left(-\sum_{u \in \mathfrak{B}_t }-\ln \varphi_t\left(X_{T(u)}^u, T(u)\right)\right)\right],
\end{align*}

\noindent where \begin{align*}
   1- \varphi_t(x, s)  = \bar{u}\left(t-s,\, \sqrt{2 \beta \sigma^2 \theta^2}  \cdot  t-x\right)
    = \P_{1,1}\left(\bar{M}_{t-s} \geq \sqrt{2\beta  \sigma^2 \theta^2 } \cdot t-x\right). 
\end{align*} 
and  $\left\{M^u_{t-T(u)}: u\in \mathfrak{B}_t\right\}$ are independent random variables such that $M^u_{t-s}$ has the same law as $\bar{M}_{t-s}$ under $\P_{1, 1}$. Applying Lemmas \ref{lemPPP}, we have
\begin{align}\label{C1U02}
    & \P\left(\exists\, u \in \cN_t^2: X_t^u \geqslant \sqrt{2 \beta \sigma^2 \theta^2} \cdot   t\right) \cr
  & =   1-\E\left[\exp\left\{-\alpha\int_0^t \sum_{u\in \cN_s^1}  \bar{u}\left(t-s, \sqrt{2 \beta\sigma^2 \theta^2} \cdot  t-X_s^u\right){\rm d} s\right\}\right].
  \end{align}
  With Lemma \ref{lemMany-one} in hand, applying elementary inequality $1-e^{-x}\leq x$ for $x\geq0$, we have
\begin{align}\label{C1U02a}
 \text{\uppercase\expandafter{\romannumeral2}}& \leq  \E\left[\alpha\int_0^t \sum_{u\in \cN_s^1}  \bar{u}\left(t-s, \sqrt{2 \beta\sigma^2 \theta^2} \cdot  t-X_s^u\right){\rm d} s\right]\cr
  &=
  \alpha\int_0^t{\rm d} s  \int_{-\infty}^{+\infty} e^{\beta s} \,  \bar{u}\left(t-s, \sqrt{2 \beta\sigma^2 \theta^2} \cdot  t-x\right) \frac{1}{\sqrt{2\pi\sigma^2 s}}e^{-\frac{x^2}{2\sigma^2s}}{\rm d}x\cr
  &=
  \alpha t^{3/2}\int_0^1{\rm d} s  \int_{-\infty}^{+\infty} e^{\beta s t} \,  \bar{u}\left((1-s)t, \sqrt{2 \beta\sigma^2 \theta^2} \cdot  t-xt\right) \frac{1}{\sqrt{2\pi\sigma^2 s }}e^{-\frac{x^2 t}{2\sigma^2s}}{\rm d}x.
\end{align}
By \eqref{cvgsbbm}, for any $0<s<1$ and $x<\sqrt{2 \beta\sigma^2 \theta^2} -\sqrt{2}(1-s)$, we have, as $t\rightarrow\infty$, 
\[
\bar{u}\left((1-s)t, \sqrt{2 \beta\sigma^2 \theta^2} \cdot  t-xt\right) \leq 
O(1)\frac{1}{\sqrt{t}} \exp{\left\{\left(1-s-\frac{(\sqrt{2\beta\sigma^2\theta^2}-x)^2}{2(1-s)}\right)t\right\}}.
\]
Thus
\[
I_2\leq O(1)t\int_0^1{\rm d} u  \int_{-\infty}^{+\infty} e^{f_1^*(u, x)t}\frac{1}{\sqrt{u }}{\rm d}x,
\]
where for $0<u<1,\, x\in {\mathbb R},$
\[
f^*_1(u, x)= \beta u-\frac{x^2}{2\sigma^2 u}+\left(1-u-\frac{(\sqrt{2\beta\sigma^2\theta^2}-x)^2}{2(1-u)}\right) {\bf 1}_{\{x<\sqrt{2 \beta\sigma^2 \theta^2} -\sqrt{2}(1-u) \}},
\]
By studying function $x\mapsto f^*_1(u, x)$, one can see that
\begin{eqnarray}\label{fu}
\max_{x\in {\mathbb R}}f^*_1(u, x)&=&f^*_1(u, x^*(u))=:f_1(u) \cr
                        &=&\begin{cases}(\beta-1)u+1-\frac{\beta\sigma^2\theta^2}{1-u+\sigma^2 u},\quad &\text{if }u(\sigma^2-1)<\sqrt{\beta\sigma^2 \theta^2}-1,\cr
                         \beta u-\frac{(\sqrt{\beta\sigma^2 \theta^2} -(1-u))^2}{\sigma^2 u},\quad &\text{if }u(\sigma^2-1)\geq \sqrt{\beta\sigma^2 \theta^2}-1,
                         \end{cases}                        
\end{eqnarray}
with
\[x^*(u)= \begin{cases} \frac{ \sqrt{2 \beta} \theta \sigma^3 u}{1-u+\sigma^2 u},\quad &\text{if }u(\sigma^2-1)<\sqrt{\beta\sigma^2 \theta^2}-1,\cr
                         \sqrt{2 \beta\sigma^2 \theta^2} -\sqrt{2}(1-u)
                         ,\quad &\text{if }u(\sigma^2-1)\geq \sqrt{\beta\sigma^2 \theta^2}-1,
                         \end{cases}    \] 
where $u(\sigma^2-1)<\sqrt{\beta\sigma^2 \theta^2}-1 $ is equivalent to $\frac{\sqrt{2 \beta} \theta \sigma^3 u}{1-u+\sigma^2 u}<\sqrt{2 \beta\sigma^2 \theta^2}-\sqrt{2}(1-u).$
\noindent Furthermore, 
\[
f'_1(u) =\begin{cases}\beta-1+\frac{\beta\sigma^2\theta^2(\sigma^2-1)}{(1-u+\sigma^2 u)^2},\quad &\text{if }u(\sigma^2-1)<\sqrt{\beta\sigma^2 \theta^2}-1,\cr
\frac{(\beta\sigma^2-1)u^2+(\sqrt{\beta\sigma^2\theta^2}-1)^2}{\sigma^2u^2},\quad &\text{if }u(\sigma^2-1)\geq \sqrt{\beta\sigma^2 \theta^2}-1.
                         \end{cases}                     
\]
 Also notice that for $\left(\beta, \sigma^2\right) \in \mathcal{C}_\text{\uppercase\expandafter{\romannumeral1}}$, we always have \[\beta\sigma^2\geq \begin{cases}
 1,\quad &\beta\leq 1,\cr
 \frac{\beta^2}{2\beta-1}\geq1,\quad &\beta >1. 
 \end{cases}\] Then we have two cases.

 {\bf Case I.} If $\left(\beta, \sigma^2\right) \in \mathcal{C}_\text{\uppercase\expandafter{\romannumeral1}}$ and $\sigma^2\geq 1$, then
$f_1^{\prime}(u)\geq 0$ for $0<u<1$.  Thus $\max_{0\leq u\leq 1}f_1(u)=f_1(1)$ and hence
 \[
 \max_{0\leq u\leq 1,\, x\in \mathbb R}f^*_1(u,x)=f_1\left(1\right)=\beta(1-\theta^2).
 \]
 Hence
\begin{equation}\label{da1}
   \limsup_{t \rightarrow \infty} \frac{1}{t} \ln \P\left(M_t\geq \sqrt{2 \beta \sigma^2\theta^2 }\cdot  t \right)\leq \beta\left(1-\theta^2\right).
\end{equation}
 

{\bf Case II.} If $\left(\beta, \sigma^2\right) \in \mathcal{C}_\text{\uppercase\expandafter{\romannumeral1}}$ and $\beta >1,\sigma ^2 <  1$, then obviously $u\mapsto f_1^{\prime}(u)$ is decreasing. Then we have three subcases.

First, if $\theta^2<  \frac{(\beta -1)\sigma ^2}{\left(1-\sigma^2\right)  \beta}$, which implies $f_1^{\prime}(1)>0$, then $\max_{0\leq u\leq 1}f_1(u)=f_1(1)$ and 
\begin{equation}\label{da3}
  \limsup_{t \rightarrow \infty} \frac{1}{t} \ln \P\left(M_t\geq\sqrt{2 \beta \sigma^2\theta^2 }\cdot  t \right)\leq  \beta\left(1-\theta^2\right).
\end{equation}

Second, if  $\theta^2> \frac{(\beta-1) }{\left(1-\sigma^2\right) \sigma ^2\beta}$,\, which means $f_1^{\prime}(0)<0$,  then $\max_{0\leq u\leq 1}f_1(u)=f_1(0)$ and 
\begin{equation}\label{da4}
  \limsup_{t \rightarrow \infty} \frac{1}{t} \ln \P\left(M_t\geq\sqrt{2 \beta \sigma^2\theta^2 }\cdot  t \right)\leq  1-\theta^2 \beta \sigma^2.
\end{equation}

Last, if $ \frac{(\beta -1)\sigma ^2}{\left(1-\sigma^2\right)  \beta} \leq \theta^2\leq \frac{(\beta-1) }{\left(1-\sigma^2\right) \sigma ^2\beta}$, 
then  one can check that $f_1^{\prime}(0)\geq 0$ and $f_1^{\prime}(1)\leq 0$. Thus $\max_{0\leq u\leq 1}f_1(u)=f_1(u_0)$ with
\begin{equation}\label{u0}
  u_0=\frac{\sqrt{\left|\frac{\sigma^2-1}{1-\beta}\right|} \cdot \sigma \theta \cdot \sqrt{\beta}-1}{\sigma^2-1}.
\end{equation}
 
 \noindent Therefore,
\begin{equation}\label{da5}
  \limsup_{t \rightarrow \infty} \frac{1}{t} \ln \P\left(M_t\geq\sqrt{2 \beta \sigma^2\theta^2 }\cdot  t \right)\leq f_1(u_0)=  \frac{-2 \sqrt{\beta(\beta-1)\left(1-\sigma^2\right)} \sigma \theta+\beta-\sigma^2}{1-\sigma^2}.
\end{equation}
This proves the upper bound. \\

\begin{rem} Notice that
$$
\operatorname{Var}\left(\sigma B_s+B_t-B_s\right)=\sigma^2 \cdot s+(t-s). 
$$

\noindent Then applying Lemma \ref{lemMany-one} and \eqref{BMasy},  we also have
\begin{align}
    I_2 & \leq \alpha \int_0^t e^{\beta s+(t-s)}  \P\left(\sigma B_s+B_t-B_s \geqslant \sqrt{2 \beta \sigma^2\theta^2 }\cdot  t \right) \d s 
    \cr & \leq \alpha \int_0^t e^{\beta s+(t-s)} \cdot \frac{1}{\sqrt{2 \pi\left(\sigma^2 s+t-s\right)}} \cdot e^{-\frac{2 \beta\sigma^2  \theta^2 t^2}{2\left(\sigma^2 s+t-s\right)}} \d s \cr
    & = \alpha \int_0^1 e^{f_1(u)t} \cdot \frac{\sqrt{t}}{\sqrt{2 \pi\left((\sigma^2 -1)u+1\right)}}  \d u.
\end{align} 


\noindent If $\left(\beta, \sigma^2\right) \in \mathcal{C}_\text{\uppercase\expandafter{\romannumeral1}}$, $\beta \leq 1$ and $\theta^2\leq \frac{(1-\beta)\sigma^2}{(\sigma^2-1)\beta}$, then one can check that $\max_{0\leq u\leq 1}f_1(u)=f_1(u_0)$. So we will get
\begin{equation*}
   \limsup_{t \rightarrow \infty} \frac{1}{t} \ln \P\left(M_t\geq \sqrt{2 \beta \sigma^2\theta^2 }\cdot  t \right)\leq f'(u_0),
\end{equation*}
which is larger than $\beta(1-\theta^2).$ Thus we would get a rough upper bound. 
\end{rem}
\end{proof}

\subsection{Lower bound for Theorem \ref{ThmcaseI}}\label{SecLowerI}

Notice that
\begin{align}\label{C1L00a}
& \P\left(M_t \geqslant \sqrt{2 \beta  \sigma^2\theta^2} \cdot  t\right) \cr
\geqslant & \max \left\{\P\left(\exists\, u \in \cN_t^1: X_t^u \geqslant \sqrt{2 \beta \sigma^2 \theta^2} \cdot  t\right), \P\left(\exists\, u \in \cN_t^2: X_t^u \geqslant \sqrt{2 \beta \sigma^2 \theta^2} \cdot   t\right)\right\}.
\end{align}

\noindent First, from \eqref{cvgsbbm}, we have
\begin{align}\label{C1L00b}
\lim_{t\rightarrow\infty}\frac{1}{t}\ln \P\left(\exists\, u \in \cN_t^1: X_t^u \geqslant \sqrt{2 \beta \sigma^2 \theta^2} \cdot   t\right)  &=\lim_{t\rightarrow\infty}\frac{1}{t}\ln \P_{\sigma^2, \beta}\left(\bar{M}_t \geqslant \sqrt{2 \beta  \sigma^2 \theta^2 }  \cdot t\right) \cr &= \beta\left(1-\theta^2\right).
\end{align}

\noindent Recall \eqref{C1U02}. Then
\begin{align}\label{C1L02}
    & \P\left(\exists\, u \in \cN_t^2: X_t^u \geqslant \sqrt{2 \beta \sigma^2 \theta^2} \cdot   t\right) \cr
  & \geq 
  1-\E\left[\exp\left\{-\alpha\int_{I_t} \sum_{u\in \cN_s^1(x_t)}  \bar{u}\left(t-s, \sqrt{2 \beta\sigma^2 \theta^2} \cdot  t-x_t \right){\rm d} s\right\}\right]  \cr
  & = 
  1-\E\left[\exp\left\{-\alpha\int_{I_t} \left| \cN_s^1(x_t)\right|  \bar{u}\left(t-s, \sqrt{2 \beta\sigma^2 \theta^2} \cdot  t-x_t \right){\rm d} s\right\}\right],
\end{align}
where $I_t\subset [0, t]$ is an interval of length $1$, $x_t\in {\mathbb R}$ is a deterministic function of $t$, and
\[
\cN_s^1(a)=\{u\in \cN_s^1: X_s^u\geq a\}.
\]
Both $I_t$ and $x_t$ will be determined later from case to case.  Then, applying Jensen's inequality and Fubini's theorem, gives 

\begin{align}\label{C1L03}
& \P\left(\exists\, u \in \cN_t^2: X_t^u \geqslant \sqrt{2 \beta \sigma^2 \theta^2} \cdot   t\right) \cr
& \geq 
 \int_{I_t} 1- \E\left[\exp\left\{-\alpha\left| \cN_s^1(x_t)\right|   \bar{u}\left(t-s, \sqrt{2 \beta\sigma^2 \theta^2} \cdot  t-x_t \right)\right\}\right] {\rm d} s.
\end{align}

\subsubsection{\bf The case of $\max_{0\leq u\leq 1} f_1(u)=f_1(0)$. }\label{SecCaseI01}

Choose 
\[
I_t=[\varepsilon t, \varepsilon t+1],\quad x_t=-\varepsilon t,
\]
for  $\varepsilon>0$ small enough. Then for $s\in I_t$,
\begin{align*}
&   \E\left[\exp\left\{-\alpha\left| \cN_s^1(x_t)\right|  \bar{u}\left(t-s, \sqrt{2 \beta\sigma^2 \theta^2} \cdot  t-x_t \right)\right\}\right] \cr
& \leq \E\left[\exp\left\{-\alpha   \bar{u}\left(t-s, \sqrt{2 \beta\sigma^2 \theta^2} \cdot  t+\varepsilon t \right)\right\}, |\cN_s^1(-\varepsilon t)  | \geq 1\right] \cr
& \quad + \P\left( |\cN_s^1(-\varepsilon t)  | =0  \right)
\cr
& =\exp\left\{-\alpha  \bar{u}\left(t-s, \sqrt{2 \beta\sigma^2 \theta^2} \cdot  t+\varepsilon t \right)\right\}\P_{\sigma^2, \beta}\left( \bar{M}_{s} \geq  -\varepsilon t\right) + \P_{\sigma^2, \beta}\left( \bar{M}_{s} < -\varepsilon t\right).
\end{align*}

\noindent This, together with \eqref{C1L03}, gives
\begin{align*}
    & \P\left(\exists\, u \in \cN_t^2: X_t^u \geqslant \sqrt{2 \beta \sigma^2 \theta^2} \cdot   t\right) \cr
  &\geq 
\int_{I_t}  \left(1- \exp\left\{-\alpha  \bar{u}\left(t-s, \sqrt{2 \beta\sigma^2 \theta^2} \cdot  t+\varepsilon t \right)\right\}\right)\P_{\sigma^2, \beta}\left( \bar{M}_{s} \geq  -\varepsilon t\right) {\rm d }s
 \cr &\geq 
\int_{0}^1  \left(1- \exp\left\{-\alpha  \bar{u}\left(t-\varepsilon t-s, \sqrt{2 \beta\sigma^2 \theta^2} \cdot  t+\varepsilon t \right)\right\}\right)\P_{\sigma^2, \beta}\left( \bar{M}_{\varepsilon t+s} \geq  -\varepsilon t\right) {\rm d }s.
\end{align*}

\noindent Then applying \eqref{cvgsbbm}, we have 
\[
\lim_{ \varepsilon \rightarrow 0}  \lim_{t \rightarrow \infty}\frac{1}{t}\ln\bar{u}\left(t-\varepsilon t-s, \sqrt{2 \beta\sigma^2 \theta^2} \cdot  t+\varepsilon t \right)\geq  1-\beta \sigma^2 \theta^2,
\]
uniformly for $s\in [0,1]$. We further  notice  that 
 \[
  \P_{\sigma^2, \beta}\left( \bar{M}_{s} \geq -\varepsilon t\right)
  \rightarrow 1,\quad \text{as }t\rightarrow\infty,
\]
uniformly for $s\in [0,1]$. Then applying Jensen's inequality and Fatou's lemma again, one has
\begin{align*}
 \liminf_{t \rightarrow \infty}\frac{1}{t}\ln    \P\left(\exists\, u \in \cN_t^2: X_t^u \geqslant \sqrt{2 \beta \sigma^2 \theta^2} \cdot   t\right) 
  =
  1-\beta\sigma^2  \theta^2
  \geq \beta (1-\theta^2),
\end{align*}
where the last inequality holds if $\theta^2\geq \frac{(\beta-1) }{\left(1-\sigma^2\right) \beta} $.
This proves Theorem \ref{ThmcaseI} for the case of $\theta^2\geq \frac{(\beta-1) }{\left(1-\sigma^2\right) \sigma ^2\beta}$, when $\sigma^2<1$ and $\beta>1$.


\subsubsection{\bf The case of $\max_{0\leq u\leq 1} f_1(u)=f_1(1)$. }\label{SecCaseI02} 

This case corresponds to the case that
\begin{align*}
&\left\{\left(\theta^2,\, \beta,\, \sigma^2\right):\theta^2>1,\, \beta\leq 1,\, \sigma^2>\frac{1}{\beta}\right\}
\bigcup 
\left\{\left(\theta^2,\, \beta,\, \sigma^2\right):\theta^2>1,\, \beta>1,\, \sigma^2>1\right\}\cr
&\quad 
\bigcup
\left\{\left(\theta^2, \beta, \sigma^2\right): \theta^2\leq \frac{(\beta -1)\sigma ^2}{\left(1-\sigma^2\right)  \beta},\, \beta>1,\, \frac{\beta}{2\beta-1}<\sigma^2<1\right\}.
\end{align*}
One can make use \eqref{C1L00a} and \eqref{C1L00b} directly to conclude. We are  done.\qed


\subsubsection{\bf The case of $\max_{0\leq u\leq 1} f_1(u)=f_1(u_0)$. }\label{SecCaseI03}

  This  corresponds to  the case that
$\beta >1, \sigma ^2< 1$, and 

\[\frac{(\beta -1)\sigma ^2}{\left(1-\sigma^2\right)  \beta}\leq \theta^2\leq \frac{(\beta-1) }{\left(1-\sigma^2\right) \sigma ^2\beta}.
\]
Recall $u_0$ from \eqref{u0}. We also recall that
\[
f_1(u_0)=\frac{-2 \sqrt{\beta(\beta-1)\left(1-\sigma^2\right)} \sigma \theta+\beta-\sigma^2}{1-\sigma^2}\geq f_1(1)=\beta(1-\theta^2).\]

\noindent  Choose 
\[
I_t=[u_0 t, u_0 t+1],\quad x_t=x_0 t,
\]
for some $0<x_0< \sqrt{2 \beta\sigma^2 \theta^2}$, whose value will be determined later.  Then for $\varepsilon>0$ small enough and $s\in I_t$,
\begin{align*}
&   \E\left[\exp\left\{-\alpha \left|\cN_s^1(x_t) \right| \bar{u}\left(t-s, \sqrt{2 \beta\sigma^2 \theta^2} \cdot  t-x_t \right)\right\}\right] \cr
& \leq \exp\left\{-\alpha   \bar{u}\left(t-s, \sqrt{2 \beta\sigma^2 \theta^2} \cdot  t-x_0 t \right)\right\}\P\left( |\cN_s^1(x_0t)  | \geq 1 \right)  + \P\left( |\cN_s^1(x_0t)  | =0  \right),
\end{align*}
where 
\[
x_0=\frac{\sqrt{2\beta } \theta u_0 \sigma^3}{1-u_0+u_0 \sigma ^2}.
\]


\noindent  Therefore,
\begin{align*}
    & \P\left(\exists\, u \in \cN_t^2: X_t^u \geqslant \sqrt{2 \beta \sigma^2 \theta^2} \cdot   t\right) 
 \cr &\geq 
\int_{0}^1  \left(1-\exp\left\{-\alpha  \bar{u}\left(t-u_0t-s, \sqrt{2 \beta\sigma^2 \theta^2} \cdot  t-x_0 t \right)\right\}\right) \cr
&\qquad\quad \times \P\left( |\cN_{u_0t+s}^1(x_0 t)  | \geq 1 \right) {\rm d }s.
\end{align*}

\noindent  Furthermore, the fact that $f_1'(u_0)=0$ gives
\[
1-u_0+u_0\sigma^2=\sqrt{\frac{1-\sigma^2}{\beta-1}} \sqrt{\beta} \sigma \theta.
\]
Then
\[x_0=\sqrt{2}\sigma^2 \sqrt{\frac{\beta-1}{1-\sigma^2}} u_0>\sqrt{2\beta}\sigma u_0\]
and
\[
\sqrt{2\beta\sigma^2 \theta^2}-x_0=\sqrt{2\beta}\sigma\theta \frac{1-u_0}{1-u_0+u_0\sigma^2}=\sqrt{2}\sqrt{\frac{\beta-1}{1-\sigma^2}}(1-u_0)>\sqrt{2}(1-u_0),
\]
because of 
\[
\sigma^2\geq \frac{\beta}{2\beta-1}>2-\beta,\quad \text{for } \beta>1.
\]
Therefore,
\[\frac{\left(\sqrt{2\beta \sigma ^2\theta^2} -x_0\right)^2}{2\left(1-u_0\right)^2}>1,\quad \text{for }\beta>1.
\]

\noindent Thus applying \eqref{cvgsbbm} again, we have 
\begin{align*}
&  \lim_{t \rightarrow \infty}\frac{1}{t}\ln\bar{u}\left(t-u_0 t-s, \sqrt{2 \beta\sigma^2 \theta^2} \cdot  t-x_0 t \right)
\cr & 
\quad \geq  
\left(1-\frac{\left(\sqrt{2\beta \sigma ^2\theta^2} -x_0\right)^2}{2\left(1-u_0\right)^2}\right)\left(1-u_0\right),
\end{align*}
uniform for $s\in [0,1]$ and 
\begin{align*}
 \lim_{t \rightarrow \infty}\frac{1}{t}\ln \P\left( |\cN_{u_0t+s}^1(x_0 t)  | \geq 1 \right)
&= \lim_{t \rightarrow \infty}\frac{1}{t}\ln \P_{\sigma^2, \beta}\left(\bar{M}_{u_0 t+s}>x_0 t\right) \cr&=
\left(1-\frac{x_0^2}{{2\beta}\sigma^2 u_0^2}\right)\beta u_0
 = \beta u_0 -\frac{x_0 ^2}{2 \sigma^2 u_0},
\end{align*}
uniform for $s\in [0,1]$. Then applying Jensen's inequality and Fatou's lemma, one has
\begin{align}\label{xiao1}
& \liminf_{t \rightarrow \infty}\frac{1}{t}\ln    \P\left(\exists\, u \in \cN_t^2: X_t^u \geqslant \sqrt{2 \beta \sigma^2 \theta^2} \cdot   t\right) 
 \cr & \geq 
 \beta u_0 -\frac{x_0 ^2}{2 \sigma^2 u_0}+\left(1-\frac{\left(\sqrt{2\beta \sigma ^2\theta^2} -x_0\right)^2}{2\left(1-u_0\right)^2}\right)\left(1-u_0\right)
\cr&  = (\beta-1) u_0+1-\frac{\sigma^2 \beta \theta^2}{(\left.\sigma^2-1\right) u_0+1}
\cr&  = 
\frac{-2 \sqrt{\beta(\beta-1)\left(1-\sigma^2\right)} \sigma \theta+\beta-\sigma^2}{1-\sigma^2}.
\end{align}

\noindent This proves Theorem \ref{ThmcaseI} for the case  of $\max_{0\leq u\leq 1} f_1(u)=f_1(u_0)$. We are done.

\section{Proof of Theorem \ref{ThmcaseII}}\label{SecCaseII}
In this section, we always suppose that
\[(\beta, \sigma^2)\in\mathcal{C}_{\text{\uppercase\expandafter{\romannumeral2}}}  =\left\{\left(\beta, \sigma^2\right): \sigma^2<\frac{\mathbb{1}_{\{\beta \leq 1\}}}{\beta}+\mathbb{1}_{\{\beta>1\}}(2-\beta)\right\}.
\]
One can use arguments similar to those in Section \ref{SecCaseI}.  We just replace $\sqrt{2\beta \sigma^2}\theta$ by $\sqrt{2} \theta$.  However, for the upper bound we have a simpler method.

\subsection{Upper bound for Theorem \ref{ThmcaseII}}
We first consider the upper bound.
\begin{proof}
First,
$$
\begin{aligned}
& \P\left(M_t \geqslant \sqrt{2 } \theta t\right) \cr
& \leqslant 2\max \left[ e^{\beta t}  \P\left(\sigma B_t \geqslant \sqrt{2 } \theta t\right),\alpha \int_0^t e^{\beta s+(t-s)}  \P\left(\sigma B_s+B_t-B_s \geqslant \sqrt{2 } \theta t\right) d s\right]\cr
&=:2\max ( \text{\uppercase\expandafter{\romannumeral1}},\,\text{\uppercase\expandafter{\romannumeral2}}).
\end{aligned}
$$

\noindent By \eqref{BMasy}, we have 
$$
\begin{aligned}
  \text{\uppercase\expandafter{\romannumeral1}}  \leq O(1) \cdot \frac{1}{\sqrt{t}} \cdot e^{\beta t} \cdot e^{-\frac{2  \theta^2 t^2}{2\sigma ^2 t}} =\frac{O(1)}{\sqrt{t}} \cdot e^{\left(\beta -\frac{\theta^2}{\sigma ^2} \right) t}.
\end{aligned}
$$
and  by Lemma \ref{lemMany-one},  we also have
\begin{align}
   \text{\uppercase\expandafter{\romannumeral2}} & \leq \alpha \int_0^t e^{\beta s+(t-s)}  \P\left(\sigma B_s+B_t-B_s \geqslant \sqrt{2 \theta^2 }\cdot  t \right) \d s 
    \cr & \leq \alpha \int_0^t e^{\beta s+(t-s)} \cdot \frac{1}{\sqrt{2 \pi\left(\sigma^2 s+t-s\right)}} \cdot e^{-\frac{2  \theta^2 t^2}{2\left(\sigma^2 s+t-s\right)}} \d s \cr
    & = \alpha \int_0^1 e^{f_2(u)t} \cdot \frac{\sqrt{t}}{\sqrt{2 \pi\left((\sigma^2 -1)u+1\right)}}  \d u,
\end{align} 
where
\begin{equation}\label{2fu}
  f_2(u)=(\beta-1) u+1-\frac{\theta^2}{(\left.\sigma^2-1\right) u+1}, \quad u \in(0,1).
\end{equation}


\noindent Then we have several cases.

{\bf Case I.} $\left(\beta, \sigma^2\right) \in \mathcal{C}_\text{\uppercase\expandafter{\romannumeral2}}$ and $\beta >1$.

\noindent In this case $u\mapsto f_2^{\prime}(u)$ is increasing and $f_2^{\prime}(1)< 0$. Thus $\max_{0\leq u\leq 1}f_2(u)=f_2(0)$ and hence
\begin{equation}\label{2da1}
   \lim _{t \rightarrow \infty} \frac{1}{t} \ln \P\left(M_t\geq \sqrt{2 } \theta t\right)\leq  1-\theta^2.
\end{equation}

{\bf Case II.} $\left(\beta, \sigma^2\right) \in \mathcal{C}_\text{\uppercase\expandafter{\romannumeral2}}$ and $\beta \leq 1,\sigma ^2 < 1$.

\noindent In this case $f_2^{\prime}(u)< 0$ for all $u\in [0,1]$, Thus $\max_{0\leq u\leq 1}f_2(u)=f_2(0)$ and hence

\begin{equation}\label{2da2}
\lim _{t \rightarrow \infty} \frac{1}{t} \ln \P\left(M_t\geq\sqrt{2 } \theta t\right)\leq  1-\theta^2.
\end{equation}

{\bf Case III.} $\left(\beta, \sigma^2\right) \in \mathcal{C}_\text{\uppercase\expandafter{\romannumeral2}}$ and $\beta \leq 1,\sigma ^2 \geq   1$. 
In this case, $u\mapsto f_2^{\prime}(u)$ is decreasing.

When  $\theta^2<\frac{(\beta -1) }{\left(1-\sigma^2\right) }$, then $f_2^{\prime}(0)< 0$. Thus $\max_{0\leq u\leq 1}f_2(u)=f_2(0)$ and hence
\begin{equation}\label{2da3}
  \lim _{t \rightarrow \infty} \frac{1}{t} \ln \P\left(M_t\geq\sqrt{2 } \theta t\right)\leq  1- \theta^2.
\end{equation}

When $\theta^2>\frac{(1-\beta) }{\left(\sigma^2-1\right) }\sigma ^4$, then $f_2^{\prime}(1)>0$, Thus $\max_{0\leq u\leq 1}f_2(u)=f_2(1)$ and hence
\begin{equation}\label{2da4}
  \lim _{t \rightarrow \infty} \frac{1}{t} \ln \P\left(M_t\geq\sqrt{2 } \theta t\right)\leq  \beta -\frac{\theta ^2}{\sigma ^2}.
\end{equation}

When $\frac{(\beta -1) }{\left(1-\sigma^2\right) } \leq   \theta^2 \leq   \frac{(1-\beta) }{\left(\sigma^2-1\right) }\sigma ^4$, then $f_2^{\prime}(0)\geq 0$ and $f_2^{\prime}(1)\leq 0$. Thus $\max_{0\leq u\leq 1}f_2(u)=f_2(u_0)$ with $f_2^{\prime}(u_0)=0$. Then we have
\begin{equation}\label{2u0}
  u_0=\frac{\sqrt{\left|\frac{\sigma^2-1}{1-\beta}\right|} \cdot  \theta  -1}{\sigma^2-1}
\end{equation} 
and hence
\begin{equation}\label{2da5}
  \lim _{t \rightarrow \infty} \frac{1}{t} \ln \P\left(M_t\geq\sqrt{2 } \theta t\right)\leq f_2(u_0)=  \frac{2 \sqrt{(\beta-1)\left(1-\sigma^2\right)}  \theta+\beta-\sigma^2}{1-\sigma^2}.
\end{equation}

\end{proof}

\subsection{Lower bound for Theorem \ref{ThmcaseII}}

Notice that
\begin{align}\label{C2L00a}
& \P\left(M_t \geqslant \sqrt{2} \theta  t\right) \cr
&\geqslant  \max \left\{\P\left(\exists\, u \in \cN_t^1: X_t^u \geqslant\sqrt{2} \theta  t\right), \P\left(\exists\, u \in \cN_t^2: X_t^u \geqslant \sqrt{2} \theta   t\right)\right\}.
\end{align}

\noindent First, from \eqref{cvgsbbm}, we have
\begin{align}\label{C2L00b}
\lim_{t\rightarrow\infty}\frac{1}{t}\ln \P\left(\exists\, u \in \cN_t^1: X_t^u \geqslant \sqrt{2 } \theta   t \right) 
 &=\lim_{t\rightarrow\infty}\frac{1}{t}\ln \P_{\sigma^2, \beta}\left(\bar{M}_t \geqslant \sqrt{2 }  \theta t\right) \cr & = \beta-\frac{\theta^2}{\sigma^2}.
\end{align}

\noindent By similar reasoning as \eqref{C1L02} and \eqref{C1L03}, we have
\begin{align*}
    & \P\left(\exists\, u \in \cN_t^2: X_t^u \geqslant \sqrt{2} \theta   t\right)
    \cr &\geq 
   \int_{I_t}1-   \E\left[\exp\left\{-\alpha\left|\cN_s^1(x_t)\right|  \bar{u}\left(t-s, \sqrt{2 }\theta t-x_t \right)\right\}\right] {\rm d} s,
\end{align*}
where $I_t\subset [0, t]$ is an interval of length $1$, $x_t\in {\mathbb R}$ is a deterministic function of $t$. 
Both $I_t$ and $x_t$ will be determined later from case to case. Recall from \eqref{2fu} that
 \[ f_2(u)=(\beta-1) u+1-\frac{\theta^2}{(\left.\sigma^2-1\right) u+1}, \quad u \in(0,1).
\]

\subsubsection{\bf The case of $\max_{0\leq u\leq 1} f_2(u)=f_2(0)$. }\label{SecCaseII01}

This corresponds to the cases that 
 $\beta >1$, or $\beta \leq 1, \sigma ^2 \leq   1$, or $\beta \leq 1, \sigma ^2 > 1$ and $ \theta^2<\frac{(\beta -1) }{\left(1-\sigma^2\right) }.$   Choose 
\[
I_t=[\varepsilon t, \varepsilon t+1],\quad x_t=-\varepsilon t,
\]
for  $\varepsilon>0$ small enough. Then 
\begin{align*}
    & \P\left(\exists\, u \in \cN_t^2: X_t^u \geqslant \sqrt{2 } \theta   t\right) \cr
  &\geq 
  \int_{0}^1  \left(1- \exp\left\{-\alpha  \bar{u}\left(t-\varepsilon t-s,  \sqrt{2 } \theta    t+\varepsilon t \right)\right\}\right)\P_{\sigma^2, \beta}\left( \bar{M}_{\varepsilon t+s} \geq  -\varepsilon t\right) {\rm d }s.
\end{align*}

\noindent Then applying \eqref{cvgsbbm}, we have 
\[
\lim_{ \varepsilon \rightarrow 0}  \liminf_{t \rightarrow \infty}\frac{1}{t}\ln\bar{u}\left(t-s,  \sqrt{2 } \theta   t+\varepsilon t \right)\geq  1-  \theta^2.
\]

\noindent uniformly for $s\in[0,1]$. We further  notice  that 
 \[
  \P_{\sigma^2, \beta}\left( \bar{M}_{\varepsilon t+s} \geq -\varepsilon t\right)
  \rightarrow 1,\quad \text{as }t\rightarrow\infty,
\]
\noindent  uniformly for $s\in[0,1]$. Then applying Jensen's inequality and Fatou's lemma, one has
\begin{align*}
  \liminf_{t \rightarrow \infty}\frac{1}{t}\ln    \P\left(\exists\, u \in \cN_t^2: X_t^u \geqslant  \sqrt{2 } \theta      t\right) 
  & \geq 
  1- \theta^2.
\end{align*}
This proves Theorem \ref{ThmcaseII} for the cases that  $\beta >1$, or $\beta \leq 1, \sigma ^2 \leq   1$, or $\beta \leq 1, \sigma ^2 > 1$ and $ \theta^2<\frac{(\beta -1) }{\left(1-\sigma^2\right) }$, since  $1- \theta^2\geq \beta-\frac{\theta^2}{\sigma^2}$ in all above cases.

\subsubsection{\bf The case of $\max_{0\leq u\leq 1} f_2(u)=f_2(1)$. }\label{SecCaseII02} 

This case corresponds to the case that
$\beta \leq 1, 1<\sigma ^2 \leq \frac{1}{\beta}$ and $ \theta^2\geq \frac{(1-\beta) }{\left(\sigma^2-1\right) }\sigma ^4.$  Then with \eqref{C2L00a} and \eqref{C2L00b} in hand,
 Theorem \ref{ThmcaseII} for the case that
$\beta \leq 1, 1<\sigma ^2 \leq \frac{1}{\beta}$ and $ \theta^2\geq \frac{(1-\beta) }{\left(\sigma^2-1\right) }\sigma ^4$ follows readily. 

\subsubsection{\bf The case of $\max_{0\leq u\leq 1} f_2(u)=f_2(u_0)$ for $0<u_0<1$. }\label{SecCaseII03}

Notice that  $f_2'(u_0)=0$.  This  corresponds to  the case that
$\beta <1,  1<\sigma ^2 < \frac{1}{\beta}$, and 
\[
\frac{(1-\beta) }{\left(\sigma^2-1\right) } <   \theta^2 <  \frac{(1-\beta) }{\left(\sigma^2-1\right) }\sigma ^4.
\]
We also recall that in this case
\[0<u_0=\frac{\sqrt{\frac{\sigma^2-1}{1-\beta}} \cdot  \theta  -1}{\sigma^2-1}<1\]
and
\[
f_2(u_0)=\frac{-2 \sqrt{(1-\beta)\left(\sigma^2-1\right)} \theta-\beta+\sigma^2}{\sigma^2-1}.\]

\noindent Choose 
\[
x_0=\frac{\sqrt{2 } \theta u_0 \sigma^2}{1-u_0+u_0 \sigma ^2},\quad x_t=x_0 t.
\]

\noindent One can check that $x_0<\sqrt{2}\theta$ since $u_0>0.$
\noindent  Then,
\begin{align*}
    & \P\left(\exists\, u \in \cN_t^2: X_t^u \geqslant \sqrt{2} \theta   t\right) 
 \cr &\geq 
\int_{0}^1  \left(1-e^{-\alpha   \bar{u}\left(t-u_0t-s, \sqrt{2} \theta  t-x_0 t \right)}\right)  \P\left( |\cN_{u_0t+s}^1(x_t)  | \geq 1 \right) {\rm d }s.
\end{align*}

\noindent We also notice that
\[\frac{\left(\sqrt{2} \theta-x_0\right)^2}{2\left(1-u_0\right)^2}>1,\]
since
\[
\sigma^2<\frac{1}{\beta}<2-\beta,\quad  \text{for}\quad  \beta<1.
\]
 
 \noindent Then applying \eqref{cvgsbbm} again, we have 
\begin{align*}
 \liminf_{t \rightarrow \infty}\frac{1}{t}\ln\bar{u}\left(t-u_0 t-s, \sqrt{2 }\theta t-x_0 t \right)
 \geq  
\left(1-\frac{\left(\sqrt{2} \theta-x_0\right)^2}{2\left(1-u_0\right)^2}\right)\left(1-u_0\right),
\end{align*}
uniform for $s\in [0,1]$  and 
\[
 \lim_{t\rightarrow\infty}\frac{1}{t}\ln\P\left( |\cN_{u_0 t+s}^1(x_t)  | \geq 1 \right) = \beta u_0 -\frac{x_0 ^2}{2 \sigma^2 u_0},
\]
uniform for $s\in [0,1]$. Then one has
\begin{align}\label{xiao1a}
&   \liminf_{t \rightarrow \infty}\frac{1}{t}\ln    \P\left(\exists\, u \in \cN_t^2: X_t^u \geqslant \sqrt{2}\theta   t\right) 
 \cr & \geq 
\beta u_0 -\frac{x_0 ^2}{2 \sigma^2 u_0}+\left(1-\frac{\left(\sqrt{2} \theta-x_0\right)^2}{2\left(1-u_0\right)^2}\right)\left(1-u_0\right)
\cr&  = (\beta-1) u_0+1-\frac{ \theta^2}{\left(\sigma^2-1\right) u_0+1}
\cr&  = 
\frac{-2 \sqrt{(1-\beta)\left(\sigma^2-1\right)} \theta-\beta+\sigma^2}{\sigma^2-1}.
\end{align}

\noindent This proves Theorem \ref{ThmcaseII} for the above case.

\section{Proof of Theorem \ref{ThmcaseIII}}\label{SecCaseIII}

In this section, we always assume that
$$
(\beta,\sigma^2)\in \mathcal{C}_{\text{\uppercase\expandafter{\romannumeral3}}}  =\left\{\left(\beta, \sigma^2\right): \sigma^2+\beta>2 \text { and } \sigma^2<\frac{\beta}{2 \beta-1}\right\}.
$$
Recall $\xi =\frac{\sigma ^2-\beta }{\sqrt{2(1-\sigma ^2)(\beta -1)} }. $
The arguments are also exactly the same as those in Section \ref{SecCaseII}. One just replace $\theta$ by $\frac{\xi \theta}{\sqrt{2}}$. We give only an outline here. 

\subsection{Upper bound}
\begin{proof}
First, by union bound, one has
\begin{align*}
& \P\left(M_t \geqslant \xi    \theta t\right) \cr
\leqslant &2\max \left( e^{\beta t}  \P\left(\sigma B_t \geqslant \xi    \theta t\right),\alpha \int_0^t e^{\beta s+(t-s)}  \P\left(\sigma B_s+B_t-B_s \geqslant \xi    \theta t\right) {\rm d} s\right)\cr
=: & 2\max\left(
\text{\uppercase\expandafter{\romannumeral1}},\,\text{\uppercase\expandafter{\romannumeral2}}\right).
\end{align*}

\noindent According to \eqref{BMasy}, we have
\[
\text{\uppercase\expandafter{\romannumeral1}} \leq O(1) \cdot \frac{1}{\sqrt{t}} \cdot e^{\beta t} \cdot e^{-\frac{ \xi ^2\theta^2 t^2}{2 \sigma ^2t}} =\frac{O(1)}{\sqrt{t}} \cdot e^{\left(\beta-\frac{ \xi ^2\theta^2 }{2 \sigma^2}\right) t}
\]
and
$$
\text{\uppercase\expandafter{\romannumeral2}} \leq \alpha \int_0^t e^{\beta s+(t-s)} \cdot \frac{1}{\sqrt{2 \pi\left(\sigma^2 s+t-s\right)}} \cdot e^{-\frac{ \xi ^2 \theta^2 t^2}{2\left(\sigma^2 s+t-s\right)}}{\rm d}s. \\
$$

\noindent For \text{\uppercase\expandafter{\romannumeral2}}, set
\begin{equation}\label{3fu}
  f_3(u)=(\beta-1) u+1-\frac{\xi^2 \theta^2}{2((\left.\sigma^2-1\right) u+1)}, \quad u \in(0,1).
\end{equation}
\noindent  Then
$$
f_3^{\prime}(u)=\beta-1+\frac{\xi^2 \left(\sigma^2-1\right)\theta^2}{2\left[\left(\sigma^2-1\right) u+1\right]^2}.
$$

\noindent and
$$
f_3(u_0)=\frac{\sqrt{2}(1-\beta)-\theta^2\left(\sigma^2-\beta\right)^2-1}{\sqrt{2}\left(\sigma^2-1\right)}+1.
$$

\noindent 
Since $\left(\beta, \sigma^2\right) \in \mathcal{C}_\text{\uppercase\expandafter{\romannumeral3}}$, then $\beta >1,\sigma ^2 < 1$ and hence $u\mapsto f_3^{\prime}(u)$ is decreasing. We have three cases. 

\noindent If $\theta^2\geq  \frac{2(\beta-1)}{1-\sigma^2}\frac{1}{\xi^2}$, then $f_3^{\prime}(0)\leq  0$. Thus $\max_{0\leq u\leq 1}f_3(u)=f_3(0)$ and hence
\begin{equation}\label{3da3}
  \limsup_{t \rightarrow \infty} \frac{1}{t} \ln \P\left(M_t\geq\xi  \theta t\right)\leq f_3(0)=  1-\frac{\xi^2\theta^2}{2}.
\end{equation}

\noindent  If $\theta^2\leq \frac{2(\beta-1)}{1-\sigma^2}\frac{\sigma^4}{\xi^2}$, then $f_3^{\prime}(1)\geq 0$. Thus $\max_{0\leq u\leq 1}f_3(u)=f_3(1)$ and hence
\begin{equation}\label{3da4}
  \limsup_{t \rightarrow \infty} \frac{1}{t} \ln \P\left(M_t\geq\xi  \theta t\right)\leq f_3(1)=   \beta-\frac{\xi^2 \theta^2}{2 \sigma^2}.
\end{equation}

\noindent  If $\frac{2(\beta-1)}{1-\sigma^2}\frac{\sigma^4}{\xi^2} < \theta^2< \frac{2(\beta-1)}{1-\sigma^2}\frac{1}{\xi^2}$, then $f_3^{\prime}(0)>0$  and $f_3^{\prime}(1)<0$. Thus $\max_{0\leq u\leq 1}f_3(u)=f_3(u_0)$ with $f_3^{\prime}(u_0)=0$ yields
that
\begin{equation}\label{3u0}
  u_0=\frac{ \xi\theta \sqrt{\frac{1-\sigma^2}{2(\beta-1)}} -1}{\sigma^2-1}
\end{equation}
and
\begin{equation}\label{3da5}
  \limsup_{t \rightarrow \infty} \frac{1}{t} \ln \P\left(M_t\geq\xi  \theta t\right)\leq f_3(u_0)=  \frac{\sqrt{2}(1-\beta)-\theta^2\left(\sigma^2-\beta\right)^2-1}{\sqrt{2}\left(\sigma^2-1\right)}+1.
\end{equation}

\end{proof}

\subsection{Lower bound for Theorem \ref{ThmcaseIII}}

Notice that
\begin{align}\label{C3L00a}
 \P\left(M_t \geqslant \xi \theta  t\right) 
\geqslant  \max \left\{\P\left(\exists\, u \in \cN_t^1: X_t^u \geqslant\xi \theta  t\right),\, \P\left(\exists\, u \in \cN_t^2: X_t^u \geqslant \xi \theta  t\right)\right\}.
\end{align}

\noindent First, from \eqref{cvgsbbm}, we have
\begin{align}\label{C3L00b}
\lim_{t\rightarrow\infty}\frac{1}{t}\ln \P\left(\exists\, u \in \cN_t^1: X_t^u \geqslant \xi \theta   t \right) 
 &=\lim_{t\rightarrow\infty}\frac{1}{t}\ln \P_{\sigma^2, \beta}\left(\bar{M}_t \geqslant \xi \theta  t\right) \cr & = \beta-\frac{\xi^2\theta^2}{2\sigma^2}.
\end{align}

\noindent By similar reasoning as \eqref{C1L02} and \eqref{C1L03}, we have
\begin{align*}
    & \P\left(\exists\, u \in \cN_t^2: X_t^u \geqslant \xi \theta   t\right)
    \cr &\geq 
   \int_{I_t}1-   \E\left[\exp\left\{-\alpha\sum_{u\in \cN_s^1(x_t)}  \bar{u}\left(t-s, \xi \theta  t-x_t \right)\right\}\right] {\rm d} s,
\end{align*}
where $I_t\subset [0, t]$ is an interval of length $1$, $x_t\in {\mathbb R}$ is a deterministic function of $t$. 
Both $I_t$ and $x_t$ will be determined later from case to case. Recall from \eqref{3fu} that
\begin{equation*}
  f_3(u)=(\beta-1) u+1-\frac{\xi^2 \theta^2}{2((\left.\sigma^2-1\right) u+1)}, \quad u \in(0,1).
\end{equation*}

\subsubsection{\bf The case of $\max_{0\leq u\leq 1} f_3(u)=f_3(0)$. }\label{SecCaseIII01}

This corresponds to the cases that 
$\theta^2\geq  \frac{2(\beta-1)}{1-\sigma^2}\frac{1}{\xi^2}$.  Choose 
\[
I_t=[\varepsilon t, \varepsilon t+1],\quad x_t=-\varepsilon t,
\]
for  $\varepsilon>0$ small enough. Then 
\begin{align*}
    & \P\left(\exists\, u \in \cN_t^2: X_t^u \geqslant \xi \theta   t\right) \cr
  &\geq 
  \int_{0}^1  \left(1- \exp\left\{-\alpha  \bar{u}\left(t-\varepsilon t-s,  \xi \theta    t+\varepsilon t \right)\right\}\right)\P_{\sigma^2, \beta}\left( \bar{M}_{\varepsilon t+s} \geq  -\varepsilon t\right) {\rm d }s.
\end{align*}

\noindent Then applying \eqref{cvgsbbm}, we have 
\[
\lim_{ \varepsilon \rightarrow 0}  \liminf_{t \rightarrow \infty}\frac{1}{t}\ln\bar{u}\left(t-s,  \xi \theta   t+\varepsilon t \right)\geq  1-  \frac{\xi^2\theta^2}{2}.
\]

\noindent We further  notice  that 
 \[
  \P_{\sigma^2, \beta}\left( \bar{M}_{\varepsilon t+s} \geq -\varepsilon t\right)
  \rightarrow 1,\quad \text{as }t\rightarrow\infty.
\]
\noindent Then applying Jensen's inequality and Fatou's lemma, one has
\begin{align*} \liminf_{t \rightarrow \infty}\frac{1}{t}\ln    \P\left(\exists\, u \in \cN_t^2: X_t^u \geqslant  \sqrt{2 } \theta      t\right) 
  & \geq 
 1-  \frac{\xi^2\theta^2}{2}.
\end{align*}
This proves Theorem \ref{ThmcaseIII} for the cases  $\theta^2\geq  \frac{2(\beta-1)}{1-\sigma^2}\frac{1}{\xi^2}$.

\subsubsection{\bf The case of $\max_{0\leq u\leq 1} f_3(u)=f_3(1)$. }\label{SecCaseIII02} 

This case corresponds to the case
 $\theta^2\leq  \frac{2(\beta-1)}{1-\sigma^2}\frac{\sigma^4}{\xi^2}$. Similarly as before, \eqref{C3L00a} and \eqref{C3L00b} yield
 Theorem \ref{ThmcaseIII} for this case.

\subsubsection{\bf The case of $\max_{0\leq u\leq 1} f_3(u)=f_3(u_0)$ for $0<u_0<1$. }\label{SecCaseIII03}

Notice that $f_3'(u_0)=0$.  This corresponds to the case 
 $ \frac{2(\beta-1)}{1-\sigma^2}\frac{\sigma^4}{\xi^2} <\theta^2<  \frac{2(\beta-1)}{1-\sigma^2}\frac{1}{\xi^2}$. Choose 
\[
x_0=\frac{\xi \theta u_0 \sigma^2}{1-u_0+u_0 \sigma ^2},\quad x_t=x_0 t.
\]

\noindent Obviously $x_0<\xi\theta$, since $u_0>0.$
\noindent  Then
\begin{align*}
    & \P\left(\exists\, u \in \cN_t^2: X_t^u \geqslant \xi \theta   t\right) 
 \cr &\geq 
\int_{0}^1  \left(1-e^{-\alpha   \bar{u}\left(t-u_0t-s,\, \xi\theta  t-x_0 t \right)}\right)  \P\left( |\cN_{u_0t+s}^1(x_t)  | \geq 1 \right) {\rm d }s.
\end{align*}

\noindent We also notice that
\[\frac{\left(\xi \theta-x_0\right)^2}{2\left(1-u_0\right)^2}>1.\]

 \noindent Then applying \eqref{cvgsbbm} again, we have 
\begin{align*}
  \liminf_{t \rightarrow \infty}\frac{1}{t}\ln\bar{u}\left(t-u_0 t-s, \xi\theta t-x_0 t \right)
 \geq  
\left(1-\frac{\left(\xi \theta-x_0\right)^2}{2\left(1-u_0\right)^2}\right)\left(1-u_0\right),
\end{align*}
uniform for $s\in [0,1]$ and 
\[
 \lim_{t\rightarrow\infty}\P\left( |\cN_{u_0 t+s}^1(x_t)  | \geq 1  \right) = \beta u_0 -\frac{x_0 ^2}{2 \sigma^2 u_0},
\]
uniform for $s\in [0,1]$. Then one has
\begin{align}\label{xiao1b}
 \liminf_{t \rightarrow \infty}\frac{1}{t}\ln    \P\left(\exists\, u \in \cN_t^2: X_t^u \geqslant \xi\theta   t\right) 
 = 
f_3(u_0).
\end{align}
 This proves Theorem \ref{ThmcaseIII} for the above case. We are done.

\subsubsection*{Acknowledgement}
We are deeply indebted to Haojun Zhao who pointed out an error in the proof of Theorem 1.1 in the previous version.

\bibliographystyle{abbrv}
\bibliography{bibliobbm.bib}

\bigskip\bigskip\bigskip

\end{document}